\newtheorem{theorem}{Theorem}[section]
\newtheorem{proposition}[theorem]{Proposition}
\newtheorem{corollary}[theorem]{Corollary}
\newtheorem{definition}[theorem]{Definition}
\newtheorem{conjecture}[theorem]{Conjecture}
\theoremstyle{plain}
\theoremstyle{remark}
\newtheorem{claim}[theorem]{Claim}
\newtheorem{remark}[theorem]{Remark}
\newtheorem{example}[theorem]{Example}
\newcommand{\C}{{\mathbb C}}
\newcommand{\F}{{\mathbb F}}
\newcommand{\Q}{{\mathbb Q}}
\newcommand{\Z}{{\mathbb Z}}
\newcommand{\N}{{\mathbb N}}
\newcommand{\Qbar}{\bar{\Q}}
\newcommand{\bG}{{\mathbb G}}
\newcommand{\bF}{{\mathbb F}}
\newcommand{\Fq}{\bF_q}
\newcommand{\lra}{\longrightarrow}
\newcommand{\cO}{\mathcal{O}}
\author{Dragos Ghioca}
\address{
Dragos Ghioca\\
Department of Mathematics\\
University of British Columbia\\
Vancouver, BC V6T 1Z2\\
Canada
}
\email{dghioca@math.ubc.ca}
\keywords{Dynamical Mordell-Lang problem, endomorphisms of algebraic tori over fields of characteristic $p$}
\subjclass[2010]{Primary: 11G10, Secondary: 37P55.}
\begin{document}
	\title[The dynamical Mordell-Lang problem]{The dynamical Mordell-Lang conjecture in positive characteristic}

\begin{abstract}
Let $K$ be an algebraically closed field of prime characteristic $p$, let $N\in\N$, let $\Phi:\bG_m^N\lra \bG_m^N$ be a self-map defined over $K$, let $V\subset \bG_m^N$ be a curve defined over $K$, and let $\alpha\in\bG_m^N(K)$. We show that the set $S=\{n\in\N\colon \Phi^n(\alpha)\in V\}$ is a union of finitely many arithmetic progressions, along with a finite set and finitely many $p$-arithmetic sequences, which are sets of the form $\{a+bp^{kn}\colon n\in\N\}$ for some $a,b\in\Q$ and some $k\in\N$. We also prove that our result is sharp in the sense that $S$ may be infinite without containing an arithmetic progression. Our result addresses a positive characteristic version of the dynamical Mordell-Lang conjecture and it is the first known instance when a structure theorem is proven for the set $S$ which includes $p$-arithmetic sequences.
	\end{abstract}

	\maketitle


 \section{Introduction}
\label{sec:introduction}

In this paper, as a matter of convention, any subvariety of a given variety is assumed to be closed.  
We denote by $\N$ the set of positive integers, we let 
$\N_0:=\N\cup\{0\}$, and let $p$ be a prime number. 
An arithmetic progression is a set of the form
$\{mk+\ell:\ k\in\N_0\}$  
for some $m,\ell\in\N_0$; note that when $m=0$, this set
is a singleton. 
For a set $X$ endowed with a self-map $\Phi$, and for $m\in\N_0$, we let $\Phi^m$
denote the $m$-th iterate $\Phi\circ\cdots\circ \Phi$, where $\Phi^0$
denotes the identity map on $X$. If $\alpha\in X$, we define the orbit
$\cO_\Phi(\alpha):=\{\Phi^n(\alpha)\colon n\in\N_0\}$.  

Motivated by the classical Mordell-Lang conjecture proved by Faltings \cite{Fal94} (for abelian varieties) and Vojta \cite{Voj96} (for semiabelian varieties), the dynamical Mordell-Lang Conjecture predicts that for a quasiprojective variety $X$ endowed with a self-map $\Phi$ defined over a field $K$ of characteristic $0$, given a point 
$\alpha\in X(K)$ and a subvariety $V$ of $X$, the set $$S:=\{n\in\N\colon \Phi^n(\alpha)\in V(K)\}$$ 
is a finite union of arithmetic progressions (see \cite[Conjecture~1.7]{GT-JNT} along with the earlier work  of Denis \cite{Den94} and Bell \cite{Bel06}). Considering $X$ a semiabelian variety and $\Phi$ the translation by a point $x\in X(K)$, one recovers the cyclic case in the classical Mordell-Lang conjecture from the above stated dynamical Mordell-Lang Conjecture; we refer the readers to \cite{book} for a survey of recent work on the dynamical Mordell-Lang conjecture. 

With the above notation for $X,\Phi,K,V,\alpha,S$, if $K$ has characteristic $p>0$ then $S$ may be infinite without containing an infinite arithmetic progression (see \cite[Example~3.4.5.1]{book} or our Example~\ref{a b non-integers}). Similar to the classical Mordell-Lang conjecture in characteristic $p$, one has to take into account varieties defined over finite fields. So, motivated by the structure theorem of Moosa-Scanlon describing in terms of $F$-sets the intersection of a subvariety of a semiabelian variety (in positive characteristic) with a finitely generated group (see \cite{Moosa-Scanlon} and also \cite{groups}), the following conjecture was proposed in \cite[Conjecture~13.2.0.1]{book}.   

\begin{conjecture}[(Ghioca-Scanlon) Dynamical Mordell-Lang Conjecture in positive characteristic]
\label{char p DML}
Let $X$ be a quasiprojective variety defined over a field $K$ of characteristic $p$. Let $\alpha\in X(K)$, let $V\subseteq X$ be a subvariety defined over $K$, and let $$\Phi:X\lra X$$ be an endomorphism defined over $K$. Then the set $S:=S(X,\Phi,V,\alpha)$ of integers $n\in\N_0$ such that $\Phi^n(\alpha)\in V(K)$ is a union of finitely many arithmetic progressions along with finitely many sets of the form
\begin{equation}
\label{form of the solutions}
\left\{\sum_{j=1}^{m} c_j p^{k_j n_j}\text{ : }n_j\in\N_0\text{ for each }j=1,\dots m\right\},
\end{equation}
for some $c_j\in\Q$, and some $k_j\in\N_0$. 
\end{conjecture}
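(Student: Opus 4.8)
The plan is to run a positive-characteristic version of the Dynamical Mordell--Lang program. In characteristic zero, in the cases where that conjecture is known, the argument rests on two pillars: a $p$-adic interpolation of the orbit $\cO_\Phi(\alpha)$ by analytic functions of $n$ (the Skolem--Mahler--Lech idea, in the form developed by Bell--Ghioca--Tucker), and the theorem of Faltings and Vojta on intersections of subvarieties of semiabelian varieties with finitely generated groups. Over a field of characteristic $p$ one must replace the first pillar by a \emph{Frobenius-twisted} interpolation --- the orbit is governed not by an analytic function of $n$ but by one involving $p^{n}$ --- and the second pillar by the Moosa--Scanlon structure theorem, which describes $V(K)\cap\Gamma$, for $\Gamma\le G(K)$ finitely generated and $G$ a semiabelian variety (applied cleanly to split tori; for non-isotrivial $G$ one also needs Hrushovski's function-field Mordell--Lang), as a finite union of $F$-sets. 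The target shape \eqref{form of the solutions} in Conjecture~\ref{char p DML} is precisely the one dictated by the groupless $F$-sets $\bigl\{\sum_{j}F^{n_j}(\gamma_j):n_j\in\N_0\bigr\}$ of Moosa--Scanlon, where $F$ is a power of the Frobenius; so one expects the answer to be of this form, and not of a simpler one.

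\medskip

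\textbf{The tractable heart: dynamics on algebraic groups.} First perform the standard reductions: replacing $X$ by the Zariski closure $Y$ of $\cO_\Phi(\alpha)$ and $V$ by $V\cap Y$, we may assume $\alpha$ has Zariski-dense orbit, so $\Phi$ is dominant and $Y$ is irreducible; removing from $V$ its finitely many $\Phi^{\ell}$-periodic irreducible components --- each of which contributes, on a suitable residue class modulo $\ell$, either an entire arithmetic progression or a finite set, since the relevant iterate of $\Phi$ still acts with a dense orbit on $Y$ --- we may further assume that no positive-dimensional periodic subvariety of $Y$ is contained in $V$. Now take $X=G$ a semiabelian variety, with $G=\bG_m^N$ the case of the present paper, and $\Phi$ an affine endomorphism $x\mapsto\psi(x)+\beta$, $\psi\in\End(G)$. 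The Cayley--Hamilton relation for $\psi$ shows that $\psi^{n}(\alpha)$, for every $n$, lies in the finitely generated subgroup generated by $\alpha,\psi(\alpha),\dots,\psi^{d-1}(\alpha)$ (where $d$ is the degree of the characteristic polynomial of $\psi$), and likewise for the accumulated translation $(\psi^{n-1}+\cdots+1)(\beta)$; hence $\cO_\Phi(\alpha)\subseteq\Gamma$ for some finitely generated $\Gamma\le G(K)$. Apply Moosa--Scanlon to write $V(K)\cap\Gamma$ as a finite union of $F$-sets $C_i+H_i$ ($H_i\le\Gamma$ a subgroup, $C_i$ a groupless $F$-set), and pull each one back along $\Theta\colon\N_0\to\Gamma$, $\Theta(n)=\Phi^n(\alpha)$. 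Pulling back the subgroup part $H_i$ amounts to a linear-recurrence membership problem for $\psi^n(\alpha)$, which a positive-characteristic Skolem--Mahler--Lech / $S$-unit analysis resolves into finitely many arithmetic progressions together with sets of the form \eqref{form of the solutions}; pulling back the Frobenius-twisted part $C_i$ forces sets of exactly the form \eqref{form of the solutions}, because the terms $F^{n_j}(\gamma_j)$ must be matched against the inseparable ($p$-power-scaled) part of $\psi^n$, which is precisely where powers of $p$ enter the exponents. When $V$ is a curve the relevant $F$-sets are one-dimensional and this matching is entirely explicit --- which is what makes the case treated in this paper manageable.

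\medskip

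\textbf{The main obstacle.} Two genuine difficulties remain, and together they explain why Conjecture~\ref{char p DML} is open in general. First, already for $X=\bG_m^N$, pulling back an $F$-set of dimension $\ge 2$ through the recurrence orbit $n\mapsto\Phi^n(\alpha)$ is a mixed Diophantine problem in characteristic $p$ whose solution set one must still prove to be of the form \eqref{form of the solutions}; the hypothesis that $V$ is a curve bypasses this, but higher-dimensional $V$ needs new input. Second, and more seriously, reducing a general quasiprojective $X$ to the case of an algebraic group --- even producing a dominant $\Phi$-equivariant map $X\to G$ with fibers one can control --- is \emph{not known even in characteristic zero}, where the Dynamical Mordell--Lang conjecture itself is open outside special classes (\'etale maps, polarizable or split maps, endomorphisms of semiabelian varieties, certain automorphisms of surfaces). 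In characteristic $p$ this reduction is further obstructed by possible inseparability of $\Phi$, which one must first strip away by passing to a quotient by a suitable power of the Frobenius of $X$. Consequently the realistic deliverable of this plan --- and what the present paper carries out --- is Conjecture~\ref{char p DML} for $X$ a torus, together with a subvariety $V$ for which the $F$-set pullback step is tractable, the case of a curve $V$ being the cleanest instance.
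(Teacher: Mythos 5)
The statement you were asked about is Conjecture~\ref{char p DML}, which the paper does \emph{not} prove; it is stated as an open conjecture, and the paper only establishes the special case of a curve $V\subset\bG_m^N$ under an arbitrary self-map (Theorem~\ref{main result}). Your text is accordingly a strategy outline rather than a proof, and you say so yourself in your final paragraph. For the portion that is actually provable, your route coincides with the paper's: write $\Phi=\beta\cdot\psi$ with $\psi$ a group endomorphism, use Cayley--Hamilton to place $\cO_\Phi(\alpha)$ inside a finitely generated group $\Gamma$ and to express $\Phi^n(\alpha)$ via integer linear recurrence sequences, invoke Moosa--Scanlon/Derksen--Masser to decompose $V(K)\cap\Gamma$ into $F$-sets, and pull each $F$-set back through the orbit map; the subgroup parts yield arithmetic progressions via Skolem--Mahler--Lech and the single-$F$-orbit parts yield $p$-arithmetic sequences. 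This is exactly Theorem~\ref{combinatorial theorem} combined with Corollary~\ref{precise intersection curves}.

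The genuine gaps preventing this plan from proving the conjecture are the ones the paper isolates in Remark~\ref{remark next}, and you should be aware that they are not merely technical. First, for $\dim V\ge 2$ the groupless $F$-sets involve products of $m\ge 2$ Frobenius orbits, and the pullback step becomes a polynomial-exponential equation of the shape \eqref{the condition 13}; even the special instance $n^d=1+\sum_{i=1}^{\ell}2^{m_i}$ is open for $\ell>3$, $d\ge 2$, so ``one must still prove'' is doing an enormous amount of work in your sketch. Second, your remark that for non-isotrivial $G$ ``one also needs Hrushovski's function-field Mordell--Lang'' is over-optimistic: Hrushovski's theorem characterizes which subvarieties have dense intersection with $\Gamma$ but does not give a structure theorem for $V(K)\cap\Gamma$, and no Moosa--Scanlon-type description is known outside the isotrivial case, so that branch of your plan has no second pillar to stand on. Third, the reduction from a general quasiprojective $X$ to an algebraic group is unavailable even in characteristic $0$. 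None of this is a criticism of your understanding --- you flag all three issues --- but it does mean the proposal establishes only what the paper establishes, not the conjecture as stated.
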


Clearly, the set $S$ may contain infinite arithmetic progressions if $V$ contains some periodic subvariety under the action of $\Phi$ which intersects $\cO_\Phi(\alpha)$. On the other hand, it is possible for $S$ to contain nontrivial sets of the form \eqref{form of the solutions} (where $m$ is even larger than $1$; see Example~\ref{2 powers of p}). 

\begin{example}
\label{2 powers of p}
Let $p>2$, let $K=\F_p(t)$, let $X=\bG_m^3$, let $\Phi:\bG_m^3\lra \bG_m^3$ given by $\Phi(x,y,z)=(tx,(1+t)y, (1-t)z)$, let $V\subset \bG_m^3$ be the hyperplane given by the equation $y+z-2x=2$, and let $\alpha=(1,1,1)$. Then one easily checks that the set $S$ from Conjecture~\ref{char p DML} consists of all numbers of the form $p^{n_1}+p^{n_2}$ for $n_1,n_2\in\N_0$.
\end{example}

In \cite[Theorem~1.4]{noetherian} (see also \cite{Clay}), it is proven that the set $S$ from Conjecture~\ref{char p DML} is a union of finitely many arithmetic progressions along with a set of Banach density $0$. However, 
Conjecture~\ref{char p DML} predicts a much more precise description of the set $S$. The only nontrivial case when Conjecture~\ref{char p DML} was known to hold prior to our present paper is the case of group endomorphisms $\Phi$ of algebraic tori $X=\bG_m^n$ under the additional assumption that the action of $\Phi$ on the tangent space at the identity of $X$ is given by a diagonalizable matrix (see \cite[Proposition~13.3.0.2]{book}). Under these assumptions, one proves that the set $S$ consists only of finitely many arithmetic progressions, i.e., the more complicated sets of the form \eqref{form of the solutions} \emph{do not appear}. 
In this paper we prove the first important partial result towards Conjecture~\ref{char p DML} by showing it holds for any curve $V$ contained in $\bG_m^N$ endowed with an arbitrary self-map $\Phi$ (not necessarily a group endomorphism). Since our result has no restriction on $\Phi$, our proof is more complicated than the proof of \cite[Proposition~13.3.0.2]{book} (knowing that $\Phi$ is a group endomorphism with its Jacobian at the identity being a diagonalizable matrix simplifies significantly the arguments since the sets of the form \eqref{form of the solutions} do not appear in the conclusion). In our Theorem~\ref{main result}, we encounter sets of the form \eqref{form of the solutions} (see also Example~\ref{a b non-integers} which shows that our conclusion is sharp).

\begin{theorem}
\label{main result}
Let $K$ be an algebraically closed field of characteristic $p$, let $N\in\N$,  let $V\subset \bG_m^N$ be an irreducible curve and $\Phi:X\lra X$ be a self-map both defined over $K$, and let $\alpha\in \bG_m^N(K)$. Then the set $S$ of all $n\in\N_0$ such that $\Phi^n(\alpha)\in V(K)$ is either a finite union of arithmetic progressions, or a finite union of sets of the form 
\begin{equation}
\label{form of solutions main result}
\left\{ap^{kn}+b\colon n\in\N_0\right\},
\end{equation}
for some $a,b\in\Q$ and $k\in\N_0$.
\end{theorem}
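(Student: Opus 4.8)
The plan is to reduce the dynamical problem to a question about solutions of a polynomial--exponential equation over a function field, and then invoke the positive-characteristic Mordell--Lang theorem of Moosa--Scanlon. First I would dispose of the trivial cases: if the orbit $\cO_\Phi(\alpha)$ is finite, then $S$ is eventually periodic and hence a finite union of arithmetic progressions; so I may assume $\alpha$ has infinite orbit. Next, since $V$ is a curve, I would pass to its normalization and use the defining equations of $V\subseteq\bG_m^N$ to rephrase ``$\Phi^n(\alpha)\in V$'' in terms of finitely many multiplicative relations among the coordinates of $\Phi^n(\alpha)$. The key structural input is that everything is defined over a finitely generated field, so I may spread out and work over a finitely generated $\F_p$-subalgebra; specializing at a suitable place, or rather working directly, the coordinates of $\Phi^n(\alpha)$ generate a finitely generated subgroup $\Gamma$ of $\bG_m^N(K)$ as $n$ varies only if $\Phi$ is essentially multiplicative --- which is \emph{not} assumed. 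So the first real step is to handle the interaction between the (possibly non-group) self-map $\Phi$ and the multiplicative structure.

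The heart of the argument, I expect, is a case analysis on the dynamics of $\Phi$ restricted to (a Zariski-dense subset of) the orbit. Using that $V$ is one-dimensional, the Zariski closure $W$ of $\cO_\Phi(\alpha)$ is a $\Phi$-invariant subvariety, and $S$ is determined by the intersection $W\cap V$, which is either all of $V$ (if $V\subseteq W$, handled by the skew-linear/periodic analysis as in \cite[Proposition~13.3.0.2]{book}) or a finite set of points. In the finite case, for each point $\beta\in (W\cap V)(K)$ I must describe $\{n\colon \Phi^n(\alpha)=\beta\}$; when $W$ is itself a curve this is a ``return-time'' set for a self-map of a curve, and here the positive-characteristic phenomena genuinely enter: $\Phi$ may act on (a piece of) $W\cong\bG_m$ or on $W\cong\A^1$ by a map like $x\mapsto x^p$ composed with a linear map, producing orbits in which $\Phi^n(\alpha)$ has coordinates that are polynomials in $p^n$, so that the condition $\Phi^n(\alpha)=\beta$ becomes an equation $a p^{kn}+b=0$-type constraint rather than a linear one. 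I would organize this via the Chebotarev/valuation-theoretic bound on the degree of $W$ together with the classification of one-dimensional $\Phi$-invariant subvarieties and the explicit form of self-maps of $\bG_m$ and of additive groups in characteristic $p$ (Frobenius twists), to pin down that $S$ has exactly the shape \eqref{form of solutions main result} --- a single power $p^{kn}$ because $V$ is a curve, so at most one ``extra'' exponential degree of freedom can occur.

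A second ingredient I would need is the Moosa--Scanlon description of $V(K)\cap\Gamma$ for a finitely generated group $\Gamma\le\bG_m^N(K)$ as a finite union of $F$-sets \cite{Moosa-Scanlon}; combined with the observation that, after replacing $\alpha$ by $\Phi^{n_0}(\alpha)$ and passing to a subprogression of $n$, the relevant coordinates of $\Phi^n(\alpha)$ do lie in a fixed finitely generated group twisted by Frobenius powers, this yields that $S$ is a finite union of $F$-sets in $\N_0$; the final step is then the elementary combinatorial fact that an $F$-set in $\Z$ that is ``one-dimensional'' (which is forced by $\dim V=1$) is either an arithmetic progression or a set of the form $\{ap^{kn}+b\}$. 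The main obstacle, I expect, is precisely the absence of any hypothesis on $\Phi$: I cannot directly linearize the dynamics, so I must first prove that, along the orbit, $\Phi$ behaves like a (Frobenius-twisted) group endomorphism on the one-dimensional invariant subvariety $W$, and I anticipate this requires a careful analysis of self-maps of curves in $\bG_m^N$ and an argument that any obstruction to such structure forces the orbit to be finite or $V\not\supseteq W$ with $W\cap V$ finite. Controlling the arithmetic (the denominators $a,b\in\Q$ and the exponent $k$) uniformly will require a height/ramification estimate to bound how the coordinates grow $p$-adically along the orbit.
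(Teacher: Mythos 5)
Your proposal has a genuine gap at its very first structural step, and this sends you down a much harder (and in places incoherent) road than the one the paper actually takes.

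You write that ``the coordinates of $\Phi^n(\alpha)$ generate a finitely generated subgroup $\Gamma$ of $\bG_m^N(K)$ as $n$ varies only if $\Phi$ is essentially multiplicative --- which is \emph{not} assumed.'' This is false: \emph{every} morphism $\bG_m^N\to\bG_m^N$ is essentially multiplicative. The invertible regular functions on $\bG_m^N$ are exactly the constants times monomials, so each coordinate of $\Phi$ is of the form $c\cdot x_1^{a_1}\cdots x_N^{a_N}$, and hence $\Phi(x)=y\cdot\Phi_0(x)$ for a fixed point $y\in\bG_m^N(K)$ and a \emph{group endomorphism} $\Phi_0$. Once you know this, the orbit $\cO_\Phi(\alpha)$ automatically lies in the finitely generated subgroup $\Gamma$ generated by the coordinates of $\alpha$ and $y$; there is no need for the detour through normalizations, the Zariski closure $W$ of the orbit, the ``classification of one-dimensional $\Phi$-invariant subvarieties,'' or the Chebotarev-style arguments you invoke. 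Those arguments are not just unnecessary — several of them are unlikely to work: the Zariski closure $W$ of an orbit of an affine endomorphism of $\bG_m^N$ need not be a curve, and saying ``$V\subseteq W$ is handled by the skew-linear/periodic analysis'' imports a diagonalizability hypothesis from \cite[Proposition~13.3.0.2]{book} that is precisely what this theorem does \emph{not} assume.

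The second gap is at the other end of the argument. You reduce to the Moosa--Scanlon description of $V(K)\cap\Gamma$ as an $F$-set (correct in spirit, and the paper does exactly this, sharpened to the curve case in Corollary~\ref{precise intersection curves}), but then assert that from this ``$S$ is a finite union of $F$-sets in $\N_0$'' and that the remaining work is ``the elementary combinatorial fact'' that such a one-dimensional $F$-set is an arithmetic progression or $\{ap^{kn}+b\}$. This conflates subsets of $\Gamma$ with subsets of $\N_0$ and hides the real difficulty: one must intersect the $F$-set $V(K)\cap\Gamma$ with the orbit, and the orbit is parametrized by $n$ via linear recurrence sequences (because $\Phi^n(\alpha)=\bigl(\sum_{i<n}\Phi_0^i(y)\bigr)+\Phi_0^n(\alpha)$ in additive notation, and $\Phi_0$ satisfies its integral characteristic equation on $G=\Gamma$). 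Determining for which $n$ a linear recurrence sequence of integers equals $b+ap^{km}$ for some $m$ requires Skolem--Mahler--Lech, Siegel's theorem on integral points, and Schmidt's theorem on related pairs of linear recurrence sequences; this is the content of the paper's Propositions~\ref{prop:Siegel}, \ref{prop:solutions of p-arithmetic sequences} and Theorem~\ref{combinatorial theorem}, and it is the heart of the proof, not an elementary combinatorial afterthought. Your sketch simply does not supply this step.

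In short: once you notice that any self-map of $\bG_m^N$ is affine, the front end of your argument collapses to one line and the entire weight of the proof shifts to the ``combinatorial'' back end, which you have underestimated. The paper's route is: (i) $\Phi=y\cdot\Phi_0$ with $\Phi_0$ a group endomorphism, so $\cO_\Phi(\alpha)\subset\Gamma$ finitely generated; (ii) Corollary~\ref{precise intersection curves} gives $V(K)\cap\Gamma$ as a finite union of cosets and translates of single $F$-orbits; (iii) Theorem~\ref{combinatorial theorem}, built on Section~\ref{section arithmetic sequences}, pulls each piece back along $n\mapsto\Phi^n(\alpha)$ to a finite union of arithmetic progressions and $p$-arithmetic sequences.
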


We note that if $k=0$ in \eqref{form of solutions main result}, then the corresponding set is a singleton; hence, both options (arithmetic progressions and sets as in \eqref{form of solutions main result}) allow for the possibility that $S$ may consist of finitely many elements (perhaps, along with finitely many infinite arithmetic progressions or finitely many sets as in \eqref{form of solutions main result}). Proposition~\ref{precise intersection curves} allows us to prove the more precise conclusion in Theorem~\ref{main result} compared to the one formulated in Conjecture~\ref{char p DML}. We show in the example below that indeed, it is possible in equation \eqref{form of solutions main result} from the conclusion of Theorem~\ref{main result} that $a$ and $b$ are \emph{not} contained in $\Z$.

\begin{example}
\label{a b non-integers}
Let $p$ be a prime number, let $V\subset \bG_m^2$ be the curve defined over $\F_p(t)$ given by the equation $tx+(1-t)y=1$, let $\Phi:\bG_m^2\lra \bG_m^2$ be the endomorphism given by $\Phi(x,y)=\left(t^{p^2-1}\cdot x, (1-t)^{p^2-1}\cdot y\right)$, and let $\alpha=(1,1)$. Then the set $S$ of all $n\in\N_0$ such that $\Phi^n(\alpha)\in V$ is 
$$\left\{\frac{1}{p^2-1}\cdot p^{2n} - \frac{1}{p^2-1}\colon n\in\N_0\right\}.$$
\end{example}

Trying to extend Theorem~\ref{main result} seems to be impossible at this moment. On one hand, if we work with an arbitrary quasiprojective variety $X$, then Conjecture~\ref{char p DML} is expected to be very difficult since even its counterpart in characteristic $0$ is only known to hold in special cases, and generally, when it holds, its proof involves methods which fail in positive characteristic, such as the so-called \emph{$p$-adic arc lemma} (see \cite[Chapter~4]{book} and its main application from \cite{Jason} to the dynamical Mordell-Lang conjecture for \'etale endomorphisms in characteristic $0$). On the other hand, even if $X$ is assumed to be $\bG_m^N$ and $\Phi$ is a group endomorphism, unless one assumes another hypothesis (either on $\Phi$ as in \cite[Chapter~13]{book}, or on $V$ as in Theorem~\ref{main result}), Conjecture~\ref{char p DML} is expected to be very difficult. Trying to solve it, even in the case of arbitrary subvarieties of $X=\bG_m^N$, it leads to difficult questions involving polynomial-exponential equations (see Remark~\ref{remark next}). Also, in Remark~\ref{remark next} we explain the difficulties one would have to overcome if one would try to extend Theorem~\ref{main result} even to the case of curves contained in an arbitrary semiabelian variety defined over a finite field.

We sketch below the plan of our paper. In Section~\ref{subsection F-sets} we discuss the classical Mordell-Lang problem for algebraic tori in characteristic $p$, by stating the results of Moosa-Scanlon \cite{Moosa-Scanlon} and of Derksen-Masser \cite{Derksen-Masser} and then  explaining its connections to Conjecture~\ref{char p DML}. In Section~\ref{section arithmetic sequences} we introduce the so-called $p$-arithmetic sequences, i.e., sets of the form \eqref{form of solutions main result} and discuss properties of these sequences including in the larger context of linear recurrence sequences. Then we  proceed to prove our main result in Section~\ref{section proof}.

\section{The Mordell-Lang problem in positive characteristic}
\label{subsection F-sets}


In the classical Mordell-Lang problem in characteristic $0$, Faltings \cite{Fal94} and Vojta \cite{Voj96} proved that if an irreducible variety $V$ of a semiabelian variety $X$ intersects a finitely generated subgroup  of $X$ in a Zariski dense set, then $V$ must be a translate of a connected algebraic subgroup of $X$. In particular, their result immediately yields a structure theorem for the intersection of any subvariety of $X$ with any finitely generated subgroup.  

Hrushovski \cite{Hrushovski} gave a complete description of all subvarieties $V$ of semiabelian varieties $X$ defined over fields $K$ of characteristic $p>0$ with the property that for some finitely generated $\Gamma\subset X(K)$, the intersection $V(K)\cap\Gamma$ is Zariski dense in $V$. However, since Hrushovski's result \cite{Hrushovski} is quite intricate, it does not give a structure theorem for the intersection of an arbitrary subvariety with an arbitrary finitely generated subgroup of $X$. In the case when $X$ is a semiabelian variety defined over a finite field, Moosa and Scanlon \cite{Moosa-Scanlon} (see also \cite{groups}) gave a complete description of the intersection $V(K)\cap\Gamma$ in terms of the so-called $F$-sets, where $F$ is the Frobenius endomorphism of $X$. In the case $X=\bG_m^N$, using a different method, Derksen and Masser \cite{Derksen-Masser} improved the result from \cite{Moosa-Scanlon} by showing that the intersection $V(K)\cap\Gamma$ can be computed effectively using heights. In order to state their structure theorem for the intersection $V(K)\cap\Gamma$ (see Theorem~\ref{Moosa-Scanlon theorem}), we introduce first the notion of $F$-sets (which also motivated the statement of Conjecture~\ref{char p DML}). 

\begin{definition} 
\label{definition F-sets}
Let $N\in\N$, let $K$ be an algebraically closed field of characteristic $p$, and let $\Gamma\subseteq \bG_m^N(K)$ be a finitely generated subgroup.
\begin{itemize}
\item[(a)]
By a \emph{product of $F$-orbits} in $\Gamma$ we mean a set of the form
$$C(\alpha_1,\dots,\alpha_m;k_1,\dots,k_m):=\left\{\prod_{j=1}^m \alpha_j^{p^{k_jn_j}} \colon n_j\in\mathbb{N}_0\right\}\subseteq\Gamma$$
where $\alpha_1,\dots,\alpha_m\in \bG_m^N(K)$ and $k_1,\dots,k_m\in\N_0$.
\item[(b)]
An \emph{$F$-set} in $\Gamma$ is a set of the form 
$C\cdot \Gamma '$ where $C$ is a product of $F$-orbits in $\Gamma$, and $\Gamma '\subseteq \Gamma$ is a subgroup, while in general, for two sets $A,B\subset \bG_m^N(K)$, the set $A\cdot B$ is simply the set $\{a\cdot b\colon a\in A\text{,  }b\in B\}$. 
\end{itemize}
\end{definition}

The motivation for using the name \emph{$F$-set} comes from the reference to the Frobenius corresponding to $\F_p$ (lifted to an endomorphism $F$ of $\bG_m^N$) since an $F$-orbit $C(\alpha;k)$ is simply the orbit of a point in $\bG_m^N(K)$ under $F^k$. 

We note that allowing for the possibility that some $k_i=0$ in the definition of a product of $F$-orbits, implicitly we allow a singleton be a product of $F$-orbits; this also explains why we do not need to consider cosets of subgroups $\Gamma '$ in the definition of an $F$-set. Furthermore, if $k_2=\cdots = k_m=0$ then the corresponding product of $F$-orbits is simply a translate of the single $F$-orbit $C(\alpha_1;k_1)$. Finally, note that the $F$-orbit $C(\alpha;k)$ for $k>0$ is finite if and only if $\alpha\in\bG_m^N\left(\overline{\mathbb{F}_p}\right)$.


\begin{theorem}[Moosa-Scanlon \cite{Moosa-Scanlon}, Derksen-Masser \cite{Derksen-Masser}]
\label{Moosa-Scanlon theorem}
Let $N\in\N$, let $K$ be an algebraically closed field of characteristic $p$, let $V\subset \bG_m^N$ be a subvariety defined over $K$ and let $\Gamma\subset \bG_m^N(K)$ be a finitely generated subgroup. Then $V(K)\cap \Gamma$ is an $F$-set contained in $\Gamma$.
\end{theorem}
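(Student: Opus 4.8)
I would prove this theorem of Moosa--Scanlon and Derksen--Masser by combining a dimension induction, Hrushovski's positive-characteristic Mordell--Lang theorem, and the structure theory of linear (unit) equations over function fields of characteristic $p$. Here, following \cite{Moosa-Scanlon}, I read ``$F$-set'' as a \emph{finite union} of the sets $C\cdot\Gamma'$ of Definition~\ref{definition F-sets}; this is harmless and lets me decompose $V$ into its finitely many irreducible components and treat each separately. I would also establish a small ``calculus of $F$-sets'': a finite union of $F$-sets in a finitely generated subgroup of $\bG_m^N(K)$ remains one after translation by an element of a finitely generated overgroup and after intersection with a subgroup. With these conventions I argue by induction on $\dim V$. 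If $V(K)\cap\Gamma$ is not Zariski dense in $V$, its Zariski closure $W$ is a proper subvariety with $V(K)\cap\Gamma=W(K)\cap\Gamma$, and the components of $W$ have strictly smaller dimension; so I may assume $V$ is irreducible and $V(K)\cap\Gamma$ is Zariski dense in $V$.

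Next I invoke Hrushovski's theorem \cite{Hrushovski}: an irreducible variety carrying a Zariski-dense set of points from a finitely generated group is, up to translation, defined over $\overline{\F_p}$. In the torus case this provides a point $g\in\bG_m^N(K)$ and a subvariety $V_0\subseteq\bG_m^N$ defined over $\overline{\F_p}$ (equivalently, over some finite field $\F_{p^e}$) with $V=g\cdot V_0$. Replacing $\Gamma$ by $\Gamma_1:=\langle\Gamma,g\rangle$ and using the calculus of $F$-sets, the identity $V(K)\cap\Gamma=\bigl(g\cdot(V_0(K)\cap\Gamma_1)\bigr)\cap\Gamma$ reduces me to showing that $V_0(K)\cap\Gamma_1$ is a finite union of $F$-sets in $\Gamma_1$. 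The decisive feature of this reduced problem is that $V_0$ is $F^e$-invariant, where $F$ is the $p$-power Frobenius: if $\beta\in V_0(K)$, then the whole orbit $\{F^{ek}(\beta):k\in\N_0\}$ lies in $V_0(K)$. This is exactly why products of $F$-orbits are forced to appear.

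Now I parametrize $\Gamma_1$: the torsion subgroup $(\Gamma_1)_{\tors}$ is finite, and after fixing a $\Z$-basis $g_1,\dots,g_r$ of $\Gamma_1$ modulo torsion, a general element reads $\gamma=\zeta\,g_1^{a_1}\cdots g_r^{a_r}$ with $\zeta$ ranging over a finite set and $a=(a_1,\dots,a_r)\in\Z^r$. Writing $\Gamma_1\subseteq\bG_m^N(L_1)$ for a finitely generated field $L_1$ of characteristic $p$, substitution of $\gamma$ into each defining equation of $V_0$ turns ``$\gamma\in V_0$'' into a finite system of equations $\sum_i c_i\,\zeta^{\mu_i}\prod_k g_k^{L_i(a)}=0$ over $L_1$, whose terms are monomials in $g_1,\dots,g_r$ with exponents $\Z$-linear forms $L_i$ in the unknown vector $a$, and whose coefficients $c_i$ lie in the finite field over which $V_0$ is defined. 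Vanishing sub-sums correspond to points on smaller subvarieties and are absorbed by the outer induction, so I reduce to the non-degenerate case and apply the structure theory of such equations in characteristic $p$ --- the substance of \cite{Derksen-Masser}, and, in the language of $F$-sets, of \cite{Moosa-Scanlon}: the set of admissible exponent vectors $a\in\Z^r$ is a finite union of sets $\{\,v_0+\Lambda+\sum_j\lambda_j p^{k_j n_j}:n_j\in\N_0\,\}$ with $v_0,\lambda_j\in\Z^r$, $\Lambda\subseteq\Z^r$ a sublattice, and $k_j\in\N_0$. Translating through $\gamma=\zeta g_1^{a_1}\cdots g_r^{a_r}$: the lattice $\Lambda$ becomes a subgroup $\Gamma'\subseteq\Gamma_1$, the vector $v_0$ together with the $p$-power geometric progressions becomes a product of $F$-orbits, and the finitely many choices of $\zeta$ only multiply the number of pieces. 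Hence $V_0(K)\cap\Gamma_1$ is a finite union of $F$-sets, and unwinding the previous paragraph completes the proof.

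The real difficulty is this last input: solving the positive-characteristic linear equations and reading off the $F$-set shape of their solution sets. Over a number field a non-degenerate unit equation has only finitely many solutions, but over a function field of characteristic $p$ one obtains infinite, Frobenius-parametrized families, and the proof must control simultaneously the multiplicative relations among $g_1,\dots,g_r$ over $L_1$, the additive arithmetic of the exponent vectors $a$, and the base-$p$ information that the Frobenius injects --- precisely the interaction that makes Conjecture~\ref{char p DML} difficult in general, and which is handled (effectively, via height estimates) in \cite{Derksen-Masser}.
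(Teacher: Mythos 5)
The paper offers no proof of this statement to compare against: Theorem~\ref{Moosa-Scanlon theorem} is quoted as a black box from \cite{Moosa-Scanlon} and \cite{Derksen-Masser}, and the only thing the paper proves in its vicinity is the refinement for irreducible curves (Corollary~\ref{precise intersection curves}). Your outline is a fair roadmap of how the result is established in the literature: Noetherian induction to reduce to an irreducible $V$ with Zariski-dense intersection; Hrushovski's theorem \cite{Hrushovski} to write $V$ as a translate of a subvariety defined over $\overline{\F_p}$ (which in the torus case does collapse to this clean form, since subtori, quotient tori and the connecting homomorphisms are all defined over the prime field); and conversion of membership in an $F^e$-invariant variety into linear equations over a finitely generated multiplicative group. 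Your reading of ``$F$-set'' as ``finite union of $F$-sets'' is also the intended one, consistent with how the paper uses the theorem in Corollary~\ref{precise intersection curves}.

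However, the decisive step --- that the solution set of a non-degenerate linear equation over a finitely generated multiplicative group in characteristic $p$ is a finite union of sets $v_0+\Lambda+\sum_j\lambda_jp^{k_jn_j}$ --- is not something you prove; it is precisely the main theorem of the two papers being cited, and you say so yourself. So the proposal is a structured reduction to the cited results rather than an independent argument; that is acceptable for a quoted theorem, but it should not be presented as a proof. Two smaller elisions worth flagging: the ``calculus of $F$-sets'' you assert (in particular, stability of finite unions of $F$-sets under intersection with a subgroup) is itself a nontrivial lemma, proved for instance in \cite{groups}; and Hrushovski's theorem is correctly applied only after quotienting by the stabilizer of $V$ (the general statement involves a surjection onto a semiabelian variety over a finite field, not literally a translate of an $\overline{\F_p}$-variety), a step your sketch passes over silently even though it happens to be harmless for $\bG_m^N$.
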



If $V$ is an irreducible curve, one can deduce easily the following slightly more precise statement which will be used in the proof of Theorem~\ref{main result}.

\begin{corollary}
\label{precise intersection curves}
With the notation from Theorem~\ref{Moosa-Scanlon theorem}, if $V$ is an irreducible curve, then the intersection $V(K)\cap\Gamma$ is either finite, or a coset of an infinite subgroup of $\Gamma$, or a finite union of translates of $F$-orbits contained in $\Gamma$.
\end{corollary}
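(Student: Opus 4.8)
The plan is to derive the corollary from the structure theorem of Moosa--Scanlon and Derksen--Masser (Theorem~\ref{Moosa-Scanlon theorem}) by exploiting that $V$ has dimension one. If $V$ is a coset of a one-dimensional subtorus $T$ of $\bG_m^N$, then it is readily checked (pick a point of $V(K)\cap\Gamma$ and translate) that $V(K)\cap\Gamma$ is either finite or a coset of an infinite subgroup of $\Gamma$, which is one of the allowed conclusions; so I would assume from now on that $V$ is \emph{not} a coset of a one-dimensional subtorus, and prove that $V(K)\cap\Gamma$ is then finite or a finite union of translates of $F$-orbits. Applying Theorem~\ref{Moosa-Scanlon theorem}, write $V(K)\cap\Gamma=\bigcup_i C_i\cdot\Gamma_i'$ as a finite union of $F$-sets, with each $C_i$ a product of $F$-orbits and each $\Gamma_i'\subseteq\Gamma$ a subgroup.

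The first point is that, under the standing assumption, every $\Gamma_i'$ is finite. If $V(K)\cap\Gamma$ is finite we are done; otherwise it is Zariski dense in the irreducible curve $V$, so some piece $C_i\cdot\Gamma_i'$ is infinite, hence Zariski dense, so $V=\overline{C_i\cdot\Gamma_i'}$. Were this $\Gamma_i'$ infinite, $H:=\overline{\Gamma_i'}$ would be a positive-dimensional algebraic subgroup, and for any $c\in C_i$ the irreducible positive-dimensional coset $c\cdot H^{\circ}$ would be contained in $V$, forcing $c\cdot H^{\circ}=V$ and $\dim H^{\circ}=1$, so $V$ would be a coset of a one-dimensional subtorus, a contradiction. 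Thus every $\Gamma_i'$ is finite, and $V(K)\cap\Gamma$ is a finite union of translates of the products of $F$-orbits $C_i$; it therefore suffices to show: \emph{if $C$ is a product of $F$-orbits whose Zariski closure $W$ is an irreducible curve that is not a coset of a one-dimensional subtorus, then $C$ is a finite union of translates of $F$-orbits.} I would first make two normalizations. Absorbing all factors $\alpha_j^{p^{k_jn_j}}$ with $k_j=0$ (which are constant) into a translation, write $C=\beta\cdot C(\alpha_1,\dots,\alpha_r;k_1,\dots,k_r)$ with all $k_j\ge1$. Next, replace each single $F$-orbit $C(\alpha_j;k_j)$ by a finite set together with finitely many $F$-orbits each having irreducible Zariski closure: this is possible because $F^{k_j}$ permutes the top-dimensional components of $\overline{C(\alpha_j;k_j)}$ and the orbit $\{(F^{k_j})^{n}(\alpha_j)\}$, after finitely many steps, meets these components in a purely periodic pattern, so $C(\alpha_j;k_j)$ decomposes into a finite set and pieces of the form $C(\alpha_j^{p^{k_j\ell}};k_jd)$ lying in a single component. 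Distributing these decompositions, each resulting piece of $C$ is either finite or a product of $F$-orbits whose closure is still $W$ and whose factors all have irreducible closure; so we may assume in addition that every $\overline{C(\alpha_j;k_j)}$ is irreducible.

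Now induct on $r$. If $r\le1$ then $C$ is already a translate of an $F$-orbit. Let $r\ge2$. If some $\alpha_j$ is a torsion point, then $C(\alpha_j;k_j)$ is finite and $C$ is a finite union of translates of the $(r-1)$-fold product $C(\alpha_1,\dots,\widehat{\alpha_j},\dots,\alpha_r;\dots)$, whose closure is either a point or a translate of $W$ (hence again an irreducible curve that is not a coset of a subtorus), so the inductive hypothesis finishes this case. If no $\alpha_j$ is torsion, then each $W_j:=\overline{C(\alpha_j;k_j)}$ is an irreducible curve, $\overline{W_1\cdots W_r}$ is a translate of $W$, and for each $j<r$ the set $\overline{W_1\cdots W_j}$ has dimension at most one. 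Here I would invoke the lemma: \emph{if $W',W''$ are irreducible curves in $\bG_m^N$ with $\dim\overline{W'\cdot W''}\le1$, then there is a one-dimensional subtorus $T$ of which both $W'$ and $W''$ are cosets.} Applying it to $W_1,W_2$, then to the curve $\overline{W_1W_2}$ (a coset of the resulting $T$) and $W_3$, and so on---each time the ambient one-dimensional subtorus is pinned down uniquely by a given coset, so the same $T$ recurs---one obtains a one-dimensional subtorus $T$ of which every $W_j$ is a coset. Then $C\subseteq\beta\cdot W_1\cdots W_r$ lies in a coset of $T$, and since $W=\overline C$ is an irreducible curve inside a coset of the one-dimensional $T$, it equals that coset, contradicting our standing assumption. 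So the ``no torsion'' case never arises, and the induction closes.

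It remains to prove the lemma, which I expect to be the main obstacle in the argument; the rest is bookkeeping with dimensions and the definitions, together with the (routine) component-splitting normalization. Put $V':=\overline{W'\cdot W''}$, an irreducible curve. For each fixed $w''\in W''$, $W'\cdot w''$ is an irreducible curve contained in $V'$, hence equals $V'$; so $W'\cdot w''=W'\cdot w'''$ for all $w'',w'''\in W''$, i.e. $W''\cdot(W'')^{-1}\subseteq\Stab(W')$. Therefore the closed subgroup $\Stab(W')$ contains the positive-dimensional set $W''(W'')^{-1}$ (positive-dimensional since $W''$ is not a single point), so $\dim\Stab(W')\ge1$; on the other hand $\Stab(W')$ acts freely by translation on $W'$, whence $\dim\Stab(W')\le\dim W'=1$. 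Thus $T:=\Stab(W')^{\circ}$ is a one-dimensional subtorus, and $W'$---irreducible of dimension one and $T$-invariant---is a single $T$-coset. By symmetry $W''$ is a coset of $\Stab(W'')^{\circ}$, and since $\overline{W'W''}$ is then simultaneously a coset of $\Stab(W')^{\circ}$ and of $\Stab(W'')^{\circ}$, these two subtori coincide, proving the lemma.
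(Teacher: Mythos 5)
Your proof is correct and rests on the same essential observation as the paper's: if (a translate of) the curve $V$ contains a product of two positive-dimensional sets, then its stabilizer in $\bG_m^N$ is positive-dimensional, which forces $V$ to be a coset of a one-dimensional subtorus. But your route is considerably more elaborate than the paper's. The paper argues directly: after disposing of the case where $\Gamma'$ is infinite (the coset case), it observes that if the product of $F$-orbits $C$ involves at least two infinite $F$-orbits $C(\alpha_1;k_1)$, $C(\alpha_2;k_2)$, then a translate $V'$ of $V$ contains $\bigl\{\alpha_1^{p^{k_1 n_1}}\alpha_2^{p^{k_2 n_2}}\bigr\}$, whence $\Stab(V')$ is positive-dimensional and $V'$ must be a coset, a contradiction; and if $C$ has at most one infinite $F$-orbit, then $C\cdot\Gamma'$ is already a finite union of translates of single $F$-orbits and one is done. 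You replace this two-case dichotomy with an induction on the number of $F$-orbit factors, preceded by a component-splitting normalization to make each factor's closure irreducible, and you package the stabilizer step into a clean stand-alone lemma about two irreducible curves whose product has dimension at most one. Both arguments work; yours is more systematic and the lemma is a nice abstraction, but the induction and normalization are more machinery than the statement needs. One small point of hygiene: when you write ``it therefore suffices to show'' and state the sub-claim under the hypothesis that $\overline{C}$ is an irreducible curve, this is not a priori known for the pieces $C_i$ (their closures could be reducible, a finite union of translates of $V$). The fix is already implicit in your normalization step---you should either perform the component-splitting before stating the sub-claim, or state the sub-claim under the weaker hypothesis that $\overline{C}$ is contained in finitely many translates of $V$, and let the normalization recover irreducibility at the start of the proof.
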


\begin{proof}
Assume $V(K)\cap\Gamma$ is infinite. Then, according to Theorem~\ref{Moosa-Scanlon theorem}, there exists an infinite $F$-set $C\cdot \Gamma '$ contained in the intersection $V(K)\cap\Gamma$. So, either the product $C$ of $F$-orbits is infinite, or the subgroup $\Gamma '$ of $\Gamma$ is infinite. 

If $\Gamma '$ is infinite, then $V$ must contain a coset $c \cdot\Gamma '$ (for some $c\in C$) and since $V$ is an irreducible curve, then $V=c \cdot H$, where $H$ is the Zariski closure of $\Gamma '$. Because $\Gamma '$ is an infinite group, we conclude that $H$ is a $1$-dimensional algebraic subtori; so, $V$ is a coset of this algebraic subgroup of $\bG_m^N$. In conclusion, the intersection $V\cap \Gamma$ must also be a coset of a subgroup of $\Gamma$; more precisely, it equals $c \cdot (H\cap\Gamma)$.

Assume now that $V$ is not a translate of an algebraic group, since in that case we get the desired conclusion; in particular, with the above notation, we also know that $\Gamma '$ is a finite subgroup of $\Gamma$. Then the product of $F$-orbits $C$ must be infinite. Assume $C$ contains the product of at least two infinite $F$-orbits; note that if there is only one infinite $F$-orbit in $C$, then we may re-write $C\cdot \Gamma '$ so that it is the union of finitely many translates of single $F$-orbits, as desired in our conclusion. Therefore, a translate $V'$ of $V$ must contain the product of two infinite $F$-orbits: 
$$C':=\left\{\alpha_1^{p^{k_1n_1}}\cdot \alpha_2^{p^{k_2n_2}}\colon n_1,n_2\in\N_0\right\},$$
for some $k_1,k_2\in\N$ and some $\alpha_1,\alpha_2\in \bG_m^N(K)\setminus \bG_m^N(\overline{\mathbb{F}_p})$. 
But then the stabilizer $W$ of $V'$ in $\bG_m^N$, i.e. the algebraic subgroup of $\bG_m^N$ consisting of  all points $x\in \bG_m^N$ such that $x\cdot V'\subset V'$ is positive-dimensional. Because $V'$ is a curve, then $V'$ must be a translate of $W$, and therefore $V$ is a translate of an algebraic group, contradicting our assumption. So, in this case it must be that $V(K)\cap\Gamma$ is a finite union of cosets of single $F$-orbits.


This concludes the proof of Proposition~\ref{precise intersection curves}.
\end{proof}


\subsection{Strategy for proving Theorem~\ref{main result}}

Here we discuss briefly the strategy for proving our main result. We have two main ingredients for the proof of Theorem~\ref{main result}: on one hand, we have Theorem~\ref{Moosa-Scanlon theorem} along with its slight strengthening from Corollary~\ref{precise intersection curves} in the case of irreducible curves, and on the other hand, we have a result combining theory of arithmetic sequences and linear algebra (Theorem~\ref{combinatorial theorem}). In order to prove Theorem~\ref{main result} we proceed as follows. With the notation as in Theorem~\ref{main result}, there exists a finitely generated group $\Gamma$ such that the orbit of $\alpha$ under $\Phi$ is contained inside $\Gamma$, and so, we obtain $V(K)\cap\cO_\Phi(\alpha)$ by intersecting first $V$ with $\Gamma$ and then with $\cO_{\Phi}(\alpha)$. We use Corollary~\ref{precise intersection curves} and then intersect each $F$-set appearing in $V(K)\cap\Gamma$ with $\cO_\Phi(\alpha)$ and apply  Theorem~\ref{combinatorial theorem}. So, essentially our argument splits into two cases:
\begin{itemize}
\item[(i)] given a coset $c\cdot \Gamma '$ of a subgroup of $\Gamma$, we show that the set of $n\in\N_0$ such that $\Phi^n(\alpha)\in c\cdot \Gamma '$ is a finite union of arithmetic progressions; and
\item[(ii)] given a translate $C$ of single $F$-orbit contained in $\Gamma$, we show that the set of $n\in\N_0$ such that $\Phi^n(\alpha)\in C$ is a  union of finitely many arithmetic progressions along with finitely many sets of the form \eqref{form of solutions main result}, i.e.,
$$
\left\{ap^{kn}+b\colon n\in\N_0\right\},
$$
for some given $a,b\in\Q$ and $k\in\N_0$. Such sets as the one above will be called \emph{$p$-arithmetic sequences} (see Definition~\ref{p-arithmetic sequences}). Then Proposition~\ref{prop:the more precise main result} delivers the more precise conclusion in Theorem~\ref{main result}.
\end{itemize}

Finally, we conclude this section by showing that for an irreducible curve $V$, if there exists an infinite arithmetic progression $\mathcal{P}$ such that $\Phi^n(\alpha)\in V$ for each $n\in\mathcal{P}$, then the set $S$ from the conclusion of Theorem~\ref{main result} is a finite union of infinite arithmetic progressions. Actually, as we will see in Proposition~\ref{prop:the more precise main result}, this conclusion holds in much higher generality than the hypotheses of Theorem~\ref{main result}.

\begin{proposition}
\label{prop:the more precise main result}
Let $X$ be a quasiprojective variety defined over an algebraically closed field $K$, let $\Phi:X\lra X$ be an endomorphism, let $V\subseteq X$ be an irreducible curve, let $\alpha\in X(K)$, and let
$$S=\{n\in\N_0\colon \Phi^n(\alpha)\in V(K)\}.$$
If $S$ contains an infinite arithmetic progression,  then $S$ is a finite union of arithmetic progression.
\end{proposition}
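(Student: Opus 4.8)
The plan is to reduce everything to a statement about a single self-map, namely the $m$-th iterate of $\Phi$, and then to a purely combinatorial observation about subsets of $\N_0$ that are stable under translation by a fixed integer.

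First I would fix an infinite arithmetic progression $\{mk+\ell\colon k\in\N_0\}\subseteq S$ with $m\geq 1$, and set $\beta:=\Phi^\ell(\alpha)$ and $\Psi:=\Phi^m$. By the definition of $S$ we have $\Psi^k(\beta)=\Phi^{mk+\ell}(\alpha)\in V(K)$ for every $k\in\N_0$, so the whole forward orbit $\cO_\Psi(\beta)$ lies on $V$. I then split into two cases according to whether $\cO_\Psi(\beta)$ is finite or infinite. If it is finite, then $\Phi^\ell(\alpha)$ is preperiodic for $\Psi=\Phi^m$, hence $\alpha$ is preperiodic for $\Phi$ and so $\cO_\Phi(\alpha)$ is finite; in that case the sequence $n\mapsto\Phi^n(\alpha)$ is eventually periodic, and therefore $S$ is eventually periodic, which is the same as being a finite union of arithmetic progressions (recall that a singleton is an arithmetic progression with $m=0$).

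If $\cO_\Psi(\beta)$ is infinite, then it is an infinite subset of the irreducible curve $V$, hence Zariski dense in $V$, so $\overline{\cO_\Psi(\beta)}=V$. Since $\Psi(\cO_\Psi(\beta))\subseteq\cO_\Psi(\beta)$ and, for any morphism, $\Psi(\overline{Z})\subseteq\overline{\Psi(Z)}$, we get $\Psi(V)=\Psi(\overline{\cO_\Psi(\beta)})\subseteq\overline{\cO_\Psi(\beta)}=V$; that is, $V$ is forward $\Psi$-invariant. I expect this step --- extracting the invariance $\Psi(V)\subseteq V$ from the density of the orbit of $\beta$ --- to be the crux of the argument, the rest being formal. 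From $\Psi(V)\subseteq V$ it follows that $S$ is stable under adding $m$: if $n\in S$ then $\Phi^n(\alpha)\in V(K)$, hence $\Phi^{n+m}(\alpha)=\Psi(\Phi^n(\alpha))\in\Psi(V)\subseteq V$, so $n+m\in S$.

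Finally I would finish with the combinatorial observation that any subset $T\subseteq\N_0$ with $n\in T\Rightarrow n+m\in T$ intersects each residue class $c+m\N_0$ (for $0\le c<m$) in either the empty set or a set of the form $\{c+mk\colon k\geq k_0\}$, which is itself an arithmetic progression of common difference $m$. Applying this to $S$ shows that $S$ is a union of at most $m$ arithmetic progressions, completing the infinite-orbit case and hence the proposition. I do not anticipate any obstacle beyond the invariance step noted above; the finite-orbit case and the combinatorial step are routine, and the only point requiring care is that $\cO_\Psi(\beta)$ being finite genuinely forces $\cO_\Phi(\alpha)$ to be finite (via preperiodicity), so the two cases are exhaustive.
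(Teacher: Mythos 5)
Your proof is correct and follows essentially the same path as the paper's: both handle the preperiodic case separately, use the Zariski density of an infinite orbit in the irreducible curve $V$ to deduce the forward invariance $\Phi^m(V)\subseteq V$ (where $m$ is the common difference of the given arithmetic progression), and then conclude from the elementary fact that a subset of $\N_0$ stable under translation by $m$ is a finite union of arithmetic progressions.
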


\begin{proof}
First we note that if $\alpha$ is preperiodic under the action of $\Phi$, i.e., its orbit $\cO_\Phi(\alpha)$ is finite, then the conclusion holds easily (see also \cite[Proposition~3.1.2.9]{book}). So, from now on, we assume $\alpha$ is not preperiodic under the action of $\Phi$.

By our assumption, we know that there exist $a,b\in\N$ such that $\{an+b\colon n\in\N_0\}\subseteq S$. Since $V$ is irreducible and $\alpha$ is not preperiodic, we conclude that $V$ is the Zariski closure of the set $\left\{\Phi^{an+b}(\alpha)\colon n\in\N_0\right\}$, and in particular, we get that $\Phi^a(V)\subseteq V$. Therefore, for each $i\in\{0,\dots, a-1\}$, if there exists some $j\in\N_0$ such that $j\equiv i\pmod{a}$ and moreover, $\Phi^j(\alpha)\in V$, then $\{an+j\colon n\in\N_0\}\subset S$. In conclusion, $S$ itself is a finite union of arithmetic progressions. 
\end{proof}

\begin{remark}
\label{a single infinite arithmetic progression}
Actually, in Proposition~\ref{prop:the more precise main result}, under the assumption that $\alpha$ is not preperiodic under $\Phi$, if we let $a_0$ be the smallest positive integer such that there exist $j_1,j_2\in\N_0$ such that $j_2-j_1=a_0$ and moreover, $\Phi^{j_1}(\alpha),\Phi^{j_2}(\alpha)\in V$, we obtain that the set $S$ is itself a \emph{single} infinite arithmetic progression of common difference $a_0$. However, this more precise statement is not needed later in our proof.
\end{remark}


\section{Arithmetic sequences}
\label{section arithmetic sequences}

In this section we prove various useful results regarding linear recurrence sequences (see Definition~\ref{definition linear recurrence sequence}). Two special types of linear recurrence sequences which appear often in our paper are arithmetic progressions and $p$-arithmetic sequences (see Definition~\ref{p-arithmetic sequences}).

\subsection{Linear recurrence sequences}
\label{section:linear recurrence sequence}

We define next linear recurrence sequences which are essential in our proof.
\begin{definition}
\label{definition linear recurrence sequence}
A linear recurrence sequence is a sequence $\{u_n\}_{n\in\N_0}\subset \C$ with the property that there exists $m\in\N$ and there exist $c_0,\dots, c_{m-1}\in\C$ such that for each $n\in\N_0$ we have
\begin{equation}
\label{equation definition linear recurrence sequence}
u_{n+m}+c_{m-1}u_{n+m-1}+\cdots + c_1u_{n+1}+c_0u_n=0.
\end{equation}
\end{definition}

If $c_0=0$ in \eqref{equation definition linear recurrence sequence}, then we may replace $m$  by $m-k$ where $k$ is the smallest positive integer such that $c_k\ne 0$; then the sequence $\{u_n\}_{n\in\N_0}$ satisfies the linear recurrence relation
\begin{equation}
\label{equation definition linear recurrence sequence 2}
u_{n+m-k}+c_{m-1}u_{n+m-1-k}+\cdots + c_ku_{n}=0,
\end{equation}
for all $n\ge k$. In particular, if $k=m$, then the sequence $\{u_n\}_{n\in\N_0}$ is eventually constant, i.e., $u_n=0$ for all $n\ge m$. 

Assume now that $c_0\ne 0$ (which may be achieved at the expense of re-writing the recurrence relation as in \eqref{equation definition linear recurrence sequence 2}); then there exists a closed form for expressing $u_n$ for all $n$ (or at least for all $n$ sufficiently large if one needs to re-write the recurrence relation as in \eqref{equation definition linear recurrence sequence 2}). The characteristic roots of a linear recurrence sequence as in \eqref{equation definition linear recurrence sequence} are the roots of the equation
\begin{equation}
\label{characteristic equation}
x^m+c_{m-1}x^{m-1}+\cdots +c_1x+c_0=0.
\end{equation}
We let $r_i$ (for $1\le i\le s$) be the (nonzero) roots of the equation \eqref{characteristic equation}; then there exist polynomials $P_i(x)\in\C[x]$ such that for all  $n\in\N_0$, we have
\begin{equation}
\label{general formula linear recurrence sequence}
u_n=\sum_{i=1}^s P_i(n)r_i^n.
\end{equation}
In general, as explained above, for an arbitrary linear recurrence sequence, the formula \eqref{general formula linear recurrence sequence} holds for all $n$ sufficiently large (more precisely, for all $n\ge m$ with the notation from \eqref{equation definition linear recurrence sequence}); for more details on linear recurrence sequences, we refer the reader to the chapter on linear recurrence sequences written by Schmidt in the book \cite{Sch03}.

It will be convenient for us later on in our proof to consider linear recurrence sequences which are given by a formula such as the one in \eqref{general formula linear recurrence sequence} (for $n$ sufficiently large) for which the following two properties hold:
\begin{enumerate}
\item[(i)] if some $r_i$ is a root of unity, then $r_i=1$; and  
\item[(ii)] if $i\ne j$, then $r_i/r_j$ is not a root of unity.
\end{enumerate}   
Such linear recurrence sequences given by formula \eqref{general formula linear recurrence sequence} and satisfying properties~(i)-(ii) above are called \emph{non-degenerate}. Given an arbitrary linear recurrence sequence, we can always split it into finitely many linear recurrence sequences which are all non-degenerate; moreover, we can achieve this by considering instead of one sequence $\{u_n\}$, finitely many sequences which are all of the form $\{u_{nM+\ell}\}$ for a given $M\in\N$ and for $\ell=0,\dots, M-1$. Indeed, assume some $r_i$ or some $r_i/r_j$ is a root of unity, say of order $M$; then for each $\ell=0,\dots, M-1$ we have that 
\begin{equation}
\label{re-writing u n}
u_{nM+\ell}=\sum_{i=1}^s P_i(nM+\ell)r_i^{\ell} (r_i^M)^n
\end{equation}
and moreover, we can re-write the formula \eqref{re-writing u n} for $u_{nM+\ell}$ by collecting the powers $r_i^M$ which are equal and thus achieve a non-degenerate linear recurrence sequence $\{u_{nM+\ell}\}$.

The following famous theorem of Skolem \cite{Skolem} (later generalized by Mahler \cite{Mahler} and Lech \cite{Lech})  will be used throughout our proof.
\begin{proposition}
\label{Skolem result}
Let $\{u_n\}_{n\in\N_0}\subset \C$ be a linear recurrence sequence, and let $c\in\C$. Then the set $S$ of all $n\in\N_0$ such that $u_n=c$ is a finite union of arithmetic progressions; moreover, if $\{u_n\}$ is a non-degenerate linear recurrence sequence, then the set $S$ is finite. 
\end{proposition}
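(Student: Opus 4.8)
The statement is the Skolem--Mahler--Lech theorem together with its non-degeneracy refinement, so I would organize the proof in three moves: reduce to the vanishing of a linear recurrence sequence; reduce the general case to the non-degenerate one via the splitting \eqref{re-writing u n}; and, for the non-degenerate case, invoke the classical $p$-adic argument. For the \emph{reduction to $c=0$}: the constant sequence equal to $c$ is a linear recurrence sequence (annihilated by $x-1$), and linear recurrence sequences are closed under termwise addition, a common annihilating polynomial being the product of the two characteristic polynomials; hence $v_n:=u_n-c$ is again a linear recurrence sequence and $S=\{n\in\N_0\colon v_n=0\}$. (If $u_n$ is eventually equal to $c$ then $S$ differs by a finite set from a single arithmetic progression and there is nothing to prove, so I may assume this is not the case.) Moreover, if $\{u_n\}$ is non-degenerate then so is $\{v_n\}$: subtracting $c$ only alters the polynomial coefficient attached to the characteristic root $1$ --- possibly creating or deleting that root --- and this affects neither condition~(i) nor condition~(ii) of Subsection~\ref{section:linear recurrence sequence}, which concern the roots alone. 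Thus it suffices to show that the zero set of a linear recurrence sequence is a finite union of arithmetic progressions, and is finite when the sequence is non-degenerate and not identically zero.

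Next, the \emph{reduction to the non-degenerate case, and the refinement}. I would write $u_n=\sum_{i=1}^{s}P_i(n)r_i^{n}$ for all $n\ge m$ as in \eqref{general formula linear recurrence sequence}, discard the terms with $P_i\equiv 0$, and choose $M\in\N$ so that $r_i^{M}=1$ whenever $r_i$ is a root of unity and $(r_i/r_j)^{M}=1$ whenever $r_i/r_j$ is a root of unity. As explained around \eqref{re-writing u n}, for each $\ell\in\{0,\dots,M-1\}$ the subsequence $w^{(\ell)}_n:=u_{Mn+\ell}$ is, after collecting the terms with equal base $r_i^{M}$, a non-degenerate linear recurrence sequence. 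If $w^{(\ell)}$ vanishes identically, the whole progression $\{Mn+\ell\colon n\in\N_0\}$ lies in $S$; otherwise the non-degenerate case (below) gives that $\{n\colon w^{(\ell)}_n=0\}$ is finite. Adjoining the finitely many indices $n<m$, one concludes that $S$ is a finite union of arithmetic progressions. Finally, if $\{u_n\}$ is itself non-degenerate and not identically zero, no $w^{(\ell)}$ can vanish identically: non-degeneracy forbids $r_i/r_j$ from being an $M$-th root of unity for $i\ne j$, so the bases $r_i^{M}$ are pairwise distinct, and a generalized power sum $\sum_i P_i(Mn+\ell)(r_i^{M})^{n}$ with pairwise distinct bases which vanishes for every $n\in\N_0$ forces all $P_i\equiv 0$, i.e., $u_n\equiv 0$ --- a contradiction. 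Hence in that case every $\{n\colon w^{(\ell)}_n=0\}$ is finite, and so is $S$.

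It remains to handle the \emph{non-degenerate case}, which is the classical Skolem--Mahler--Lech input and which I would quote from \cite{Skolem,Mahler,Lech} (see also the chapter by Schmidt in \cite{Sch03}). For completeness, the idea is the following: the $r_i$ and the coefficients of the $P_i$ generate a finitely generated extension of $\Q$, and by Lech's specialization argument one embeds this field into $\Cp$ for a suitable prime $p$ so that every $r_i$ is a $p$-adic unit and, after replacing $M$ by a further multiple, $r_i^{M}\in 1+p\scrO$ for all $i$ (with $\scrO$ the valuation ring, and $1+4\scrO$ if $p=2$). Then for each $\ell$ the function $x\mapsto\sum_i P_i(Mx+\ell)\,r_i^{\ell}\exp\bigl(x\log(r_i^{M})\bigr)$ converges for $x\in\scrO$, is $p$-adic analytic there, and equals $u_{Mx+\ell}$ for $x\in\N_0$; by Strassmann's theorem it has only finitely many zeros in $\scrO$ unless it is identically zero. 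Together with the preceding paragraph this yields both assertions.

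The main obstacle is exactly this last step --- the $p$-adic core: choosing a prime $p$ and an embedding into $\Cp$ that places all characteristic roots into $1+p\scrO$ (Lech's contribution, which also disposes of transcendental data), and then applying Strassmann's theorem to get finiteness of the zero set. This part is classical and can be invoked essentially verbatim; by contrast, tracking the ``$n$ sufficiently large'' caveat in \eqref{general formula linear recurrence sequence} and carrying out the degenerate-to-non-degenerate splitting are routine.
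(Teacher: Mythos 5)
Your sketch is a correct rendering of the standard Skolem--Mahler--Lech argument (reduction to $c=0$, splitting along residue classes modulo $M$ to reach the non-degenerate case, then Lech's specialization into $\Cp$ followed by Strassmann's theorem), and the refinement for non-degenerate sequences is handled correctly via the observation that distinctness of the $r_i^M$ prevents any subsequence $u_{Mn+\ell}$ from vanishing identically unless the whole sequence does. The paper, however, does not prove this proposition at all: it simply records the statement and cites Skolem, Mahler, and Lech, so there is no internal proof to compare against. The only caveat worth noting is the standard convention implicit in the statement and which you correctly set aside at the outset: if $u_n-c$ is eventually zero (e.g.\ $u_n\equiv c$), the sequence can still be non-degenerate in the sense of conditions (i)--(ii) while $S$ is cofinite, so the ``moreover'' clause must tacitly exclude that degenerate situation.
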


\subsection{A special type of linear recurrence sequences}

We start by introducing the following notation.
\begin{definition}
\label{p-arithmetic sequences}
Let $p$ be a prime number. We call a $p$-arithmetic sequence a set of the form
$$\{ap^{kn}+b\colon n\in\N_0\},$$
for some $a,b\in\Q$ and $k\in\N_0$.  
\end{definition}

Note that $p$-arithmetic sequences, similar to arithmetic progressions are allowed to be sets consisting of a single element (which would be the case when $a=0$ or $k=0$ in Definition~\ref{p-arithmetic sequences}). Next we will prove several results regarding $p$-arithmetic sequences  will be used later in the proof of Theorem~\ref{main result}.

\begin{proposition}
\label{prop:p-arithmetic and arithmetic}
Let $p$ be a prime number. 
\begin{enumerate}  
\item[(a)] The intersection of an arithmetic progression with a $p$-arithmetic sequence is a finite union of $p$-arithmetic sequences. 
\item[(b)] Let $a,b\in\Q$ and let $k\in\N_0$. Then the set $\{ap^{kn}+b\colon n\in\N_0\}\cap \N_0$
is itself a finite union of $p$-arithmetic sequences.
\end{enumerate}
\end{proposition}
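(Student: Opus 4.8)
The two parts share one elementary reduction: if $T\subseteq\N_0$ is a finite union of arithmetic progressions, then for any $a,b\in\Q$ and $k\in\N_0$ the set $\{ap^{kn}+b\colon n\in T\}$ is a finite union of $p$-arithmetic sequences, because substituting an arithmetic progression $n=Mj+\ell$ into $p^{kn}$ produces $(ap^{k\ell})\,(p^{kM})^{j}+b$, which is again a $p$-arithmetic sequence (and a singleton arithmetic progression contributes a singleton $p$-arithmetic sequence). So in each part it is enough to understand the set of indices $n$ for which $ap^{kn}+b$ lands in the target set, and to show that this index set is a finite union of arithmetic progressions.

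For part (b) I would first dispatch the degenerate cases: if $a=0$ or $k=0$ the sequence is a singleton, and if $a<0$ then $ap^{kn}+b\to-\infty$, so only finitely many $n$ yield an element of $\N_0$ and $\{ap^{kn}+b\}\cap\N_0$ is finite. Assume then $a>0$, $k\ge 1$, and put $T=\{n\in\N_0\colon ap^{kn}+b\in\N_0\}$; we must show $T$ is a finite union of arithmetic progressions. Now $n\in T$ iff $ap^{kn}+b\ge 0$ and $v_q(ap^{kn}+b)\ge 0$ for every prime $q$, where $v_q$ denotes $q$-adic valuation. The inequality $ap^{kn}+b\ge 0$ fails for at most finitely many $n$. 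At $q=p$: since $v_p(ap^{kn})=v_p(a)+kn\to\infty$, for all large $n$ we have $v_p(ap^{kn}+b)=v_p(b)$, so the condition at $p$ holds for all large $n$ when $b\in\Z_{(p)}$ and fails for all large $n$ otherwise — in the latter case $T$ is finite and we are done. For $q\ne p$ not dividing the denominators of $a$ and $b$ (all but finitely many $q$) the condition is automatic; for each of the remaining finitely many $q$, if $v_q(a)\ne v_q(b)$ the valuation $v_q(ap^{kn}+b)$ is independent of $n$, so the condition is either always true or always false ($T=\emptyset$), and if $v_q(a)=v_q(b)=e<0$, writing $a=q^{e}\tilde a$, $b=q^{e}\tilde b$ with $\tilde a,\tilde b\in\Z_{(q)}^{\times}$, the condition becomes $p^{kn}\equiv -\tilde b\tilde a^{-1}\pmod{q^{-e}}$; as $p$ is invertible mod $q^{-e}$, the sequence $n\mapsto p^{kn}\bmod q^{-e}$ is periodic, so its solution set is a single arithmetic progression (or empty). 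Intersecting the finitely many arithmetic progressions coming from these primes $q\ne p$ (the intersection of finitely many arithmetic progressions is again an arithmetic progression or the empty set) and discarding the finitely many $n$ excluded at $p$ and by the sign condition, we find that $T$ is a finite union of arithmetic progressions, and the reduction above finishes part (b).

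For part (a), write $P=\{mj+\ell\colon j\in\N_0\}$ and $Q=\{ap^{kn}+b\colon n\in\N_0\}$; after handling $m=0$, $a=0$, $k=0$ as above we may assume $m,k\ge 1$ and $a\ne 0$. Since $P\subseteq\N_0$, we have $P\cap Q=P\cap(Q\cap\N_0)$, and by part (b) the set $Q\cap\N_0$ is a finite union of $p$-arithmetic sequences, each consisting of nonnegative integers, so it suffices to handle the intersection of $P$ with one such sequence $Q'=\{u_n\colon n\in\N_0\}$ where $u_n=cp^{k'n}+d\in\N_0$ for all $n$ and $c\ne 0$. On $Q'$ the terms satisfy $u_{n+1}=p^{k'}u_n-(p^{k'}-1)d$, and $(p^{k'}-1)d\in\Z$ because $u_0,u_1\in\Z$; hence $u_n\bmod m$ is the orbit of a point under a self-map of $\Z/m\Z$ and is therefore eventually periodic, so $\{n\colon u_n\equiv\ell\pmod m\}$ is a finite union of arithmetic progressions. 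Intersecting with $\{n\colon u_n\ge\ell\}$ — cofinite if $c>0$, finite if $c<0$ — leaves a finite union of arithmetic progressions, and applying the reduction once more shows $P\cap Q'$ is a finite union of $p$-arithmetic sequences; summing over the finitely many $Q'$ gives part (a).

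The main obstacle, and essentially the only nontrivial point, is that $a$ and $b$ need not be integers, which is what forces the prime-by-prime valuation analysis in part (b); the delicate observation there is that at the finitely many ``bad'' primes $q\ne p$ integrality amounts to a congruence on $p^{kn}$ to a modulus coprime to $p$, hence is periodic in $n$, while at $q=p$ and for the sign condition the analysis is easy because the relevant valuation, respectively the value itself, grows with $n$.
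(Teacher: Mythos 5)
Your proof is correct. It shares the paper's high-level plan — reduce part (a) to part (b), and prove (b) by congruence arguments — but the reductions are carried out differently, and you spell out in full what the paper dismisses in one line. For part (b) the paper says only that it is ``a simple exercise using basic properties of congruences''; you supply the exercise, via a prime-by-prime $q$-adic valuation analysis: the sign condition and the condition at $q=p$ fail for only finitely many $n$ (or cofinitely many, in which case $T$ is finite), while for each of the finitely many bad primes $q\neq p$ integrality becomes a congruence $p^{kn}\equiv c\pmod{q^{-e}}$ whose solution set is a single arithmetic progression because $p$ is a unit modulo $q^{-e}$. For part (a) the two routes genuinely diverge: the paper reduces (a) to (b) by a direct change of variable, observing that the indices $n$ with $cn+d=ap^{km}+b$ are exactly $\{\tfrac{a}{c}p^{mk}+\tfrac{b-d}{c}\colon m\in\N_0\}\cap\N_0$, which is a $p$-arithmetic-sequence by (b) and whose affine image under $n\mapsto cn+d$ is then also one; you instead use (b) only to replace $Q$ by $Q\cap\N_0$, and then handle the intersection with the arithmetic progression by a fresh (but elementary) congruence argument, noting that $u_{n+1}=p^{k'}u_n-(p^{k'}-1)d$ with $(p^{k'}-1)d=p^{k'}u_0-u_1\in\Z$ makes $u_n\bmod m$ eventually periodic. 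Both are valid; the paper's change-of-variable is slicker and avoids the extra periodicity argument, while yours is more self-contained and makes the integrality issues (the fact that $a,b$ need not lie in $\Z$) entirely explicit.
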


\begin{proof}
The two parts are actually equivalent; the fact that part~(b) is a special case of part~(a) is obvious, but also the converse holds. Indeed, letting $a,b,c,d\in\Q$ and $k\in\N_0$, the set of all $cn+d$ which can be written as $ap^{km}+b$ for some $m\in\N_0$ is a $p$-arithmetic sequence once we know that the set
$$\left\{\frac{a}{c}\cdot p^{mk}+\frac{b-d}{c}\colon m\in\N_0\right\}\cap\N_0$$
is a $p$-arithmetic sequence. On the other hand, part~(b) is a simple exercise using basic properties of congruences.  
\end{proof}

The following proposition is a standard consequence of Siegel's celebrated theorem \cite{Siegel} regarding $S$-integral points on curves of positive genus; we include its proof for the sake of completeness.
\begin{proposition}
\label{prop:Siegel}
Let $s$ be a nonzero algebraic number which is not a root of unity, and let $P(x)\in\Qbar[x]$ be a polynomial of degree $d$. If there exist infinitely many pairs $(m,n)\in \N_0\times\N_0$ such that $P(n)=s^m$, then there exist $a,b\in\Qbar$ such that $P(x)=a(x-b)^d$.
\end{proposition}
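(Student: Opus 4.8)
The plan is to argue by contradiction: suppose $P(x)$ is \emph{not} of the form $a(x-b)^d$, yet there are infinitely many pairs $(m,n)\in\N_0\times\N_0$ with $P(n)=s^m$. The key point is to produce a curve of positive genus carrying infinitely many $S$-integral points, contradicting Siegel's theorem. First I would reduce to the case $d\geq 1$ (if $d=0$ then $P$ is constant and $P(x)=a(x-b)^0$ trivially with $a$ the constant), and I would choose a number field $L$ containing all coefficients of $P$ and the element $s$, together with a finite set $S$ of places of $L$ containing the archimedean places and all places dividing the numerator and denominator of $s$; then every value $s^m$ is an $S$-unit in $L$.

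The heart of the argument is the following dichotomy. Fix an integer $r\geq 2$ with $r\mid$ (something we get to choose) — more precisely, for each residue class $\ell$ modulo $r$, consider the equation $P(n)=s^{\ell}\cdot (s^{r})^{k}$ as $k$ ranges over $\N_0$. By passing to one of the finitely many residue classes $\ell\bmod r$ that still contains infinitely many solutions, and replacing $s$ by $s^r$ and $P(x)$ by $P(x)/s^\ell$ (still a polynomial over $\Qbar$, after clearing we may keep it polynomial by choosing $r$ so that $s^\ell$ is handled as a constant factor), we may assume $s$ is an $r$-th power in $L^\times$ for $r$ as large as we please. Now if $y^r = P(x)$ defines an irreducible curve, its genus grows with $r$ (by Riemann–Hurwitz, since $P$ has at least two distinct roots by our contradiction hypothesis — here is where non-degeneracy of $P$ as a pure power is used), so for $r$ large enough the curve $y^r=P(x)$ has genus $\geq 1$; and each solution $P(n)=s^{m}$ with $m=rk+\ell$ yields the point $(n, s^{k})$ on this curve, which is $S$-integral (the coordinate $s^k$ is an $S$-unit, and $n\in\Z\subseteq\fO_{L,S}$), giving infinitely many $S$-integral points — contradicting Siegel's theorem. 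If instead $y^r=P(x)$ is \emph{reducible} for every large $r$, then standard Kummer-theory/irreducibility criteria force $P(x)=c\cdot Q(x)^{e}$ with $e>1$ and $\gcd(e,r)>1$; taking $r$ coprime to $e$ (or iterating the argument on $Q$, which has strictly smaller degree) we descend to the desired conclusion, eventually arriving at $P(x)=a(x-b)^d$.

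The main obstacle, and the step I would spend the most care on, is the \emph{reducibility} case and the bookkeeping around passing to residue classes of $m$. One must arrange that, after twisting $P$ by the constant $s^\ell$, the object remains a polynomial to which the genus computation applies, and one must correctly invoke the classical fact (a consequence of the theory of $\Q(x)[y]/(y^r - P(x))$, or of Capelli's theorem on irreducibility of binomials) that $y^r - P(x)$ is reducible precisely when either $P$ is a perfect $q$-th power for some prime $q\mid r$, or ($4\mid r$ and $P=-4g^4$) — the latter subcase being harmless since it still makes $P$ a fourth power up to a constant. Handling this cleanly suggests an inductive formulation on $d=\deg P$: either we directly get a positive-genus superelliptic curve with infinitely many $S$-integral points, or $P=cQ(x)^q$ with $\deg Q = d/q < d$ and $Q(n')$ hits infinitely many $S$-units, so we conclude by induction that $Q(x)=a'(x-b)^{d/q}$, whence $P(x)=a(x-b)^d$.

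Finally I would note that the constants $a,b$ land in $\Qbar$ because $P\in\Qbar[x]$: once we know $P$ has a single root $b$ of multiplicity $d$, that root is algebraic, and $a$ is the leading coefficient of $P$, also algebraic. This matches the statement exactly, so no further refinement is needed.
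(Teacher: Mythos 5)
Your proof is correct in outline and takes a genuinely different route from the paper's. You construct the superelliptic curve $y^r = P(x)/s^\ell$ directly (after a residue-class reduction on the exponent $m$) and argue that for suitable $r$ this curve is irreducible of positive genus yet carries infinitely many $S$-integral points $(n, s^k)$. The paper instead passes through ideal factorization: it enlarges $S$ so that the $S$-integer ring $R_S$ is a PID, the $r_i - r_j$ are $S$-units, and the constants $a, s$ are $q$-th powers in $K$; then each principal ideal $(n - r_i)$ is forced to be a perfect $q$-th power of an ideal, which together with finiteness of $R_S^*/(R_S^*)^q$ and the pigeonhole principle produces infinitely many $S$-integral points on a \emph{fixed} Fermat-type plane curve $uz^q - vw^q = r_1 - r_2$ of genus $(q-1)(q-2)/2 \ge 1$. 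Your version replaces the unique-factorization bookkeeping with a genus/irreducibility computation via Riemann--Hurwitz and Capelli's theorem; the paper's version avoids any discussion of Capelli or of the reducibility of $y^r - P(x)$ at the price of a slightly heavier PID/$S$-unit setup. Both ultimately invoke Siegel's theorem, and both are legitimate.

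Two places in your write-up deserve tightening. First, the reducibility case and the ensuing induction on $\deg P$ are unnecessary: if you simply choose $r$ to be an \emph{odd prime strictly greater than} $\deg P$, then $r$ cannot divide any multiplicity $e_i$, so $P$ is not a $q$-th power in $\Qbar(x)$ for any prime $q \mid r$ (the only such $q$ is $r$ itself), and Capelli gives irreducibility of $y^r - P(x)$ over $\Qbar(x)$ outright; the genus is then $(\ell-1)(r-1)/2 \ge 1$ once $P$ has $\ell \ge 2$ distinct roots. Second, the sentence claiming ``we may assume $s$ is an $r$-th power in $L^\times$'' is garbled: the clean statement is that, restricting to one residue class $m \equiv \ell \pmod r$ supporting infinitely many solutions and setting $\widetilde{P}(x) = P(x)/s^\ell$, each solution gives $\widetilde{P}(n) = (s^k)^r$, so $(n, s^k)$ is an $S$-integral point of $y^r = \widetilde{P}(x)$ (after enlarging $L$ and $S$ so that the coefficients of $\widetilde{P}$ are $S$-integers and $s$ is an $S$-unit). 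With those two adjustments the argument is complete and matches the strength of the paper's proof.
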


\begin{proof}
We argue by contradiction, and thus assume $P(x)$ has at least two distinct roots. We let $r_1,\dots, r_\ell$ be the distinct roots of $P(x)$ and thus write
$$P(x)=a\cdot \prod_{i=1}^\ell (x-r_i)^{e_i}\text{ for some }e_i\in\N\text{ and }a\in\Qbar.$$
We let $q$ be an odd prime number larger than each $e_i$, and we let $a_1,s_1\in \Qbar$ such that $a_1^q=a$ and $s_1^q=s$. 
We let $K$ be a number field containing $a_1$, $s_1$ and also each $r_i$ for $i=1,\dots, \ell$. We let $R$ be the ring of algebraic integers contained in $K$. We let $S$ be a finite set of (nonzero) prime ideals of $R$ such that all of the following properties hold:
\begin{itemize}
\item[(i)] $a_1$, $s_1$ and each $r_i$ are contained in the subring $R_S$ of $S$-integers contained in $K$ (i.e., the ring of all elements of $K$ which are integral at all places away from the set $S$).
\item[(ii)] $r_i-r_j\in R_S^*$ for each $1\le i<j\le \ell$. 
\item[(iii)] $R_S$ is a PID.
\end{itemize}
For each $(m,n)\in\N_0\times\N_0$ such that $P(n)=s^m$, conditions~(i)-(ii) yield that each ideal $(n-r_i)$ is the $q$-th power of an ideal in $R_S$ and moreover, using condition~(iii) above, we conclude that there exist units $u_i\in R_S^*$ and also elements $z_i\in R_S$ such that $n-r_i=u_iz_i^q$ for each $i=1,\dots, \ell$. Furthermore, because $R_S^*$ is a finitely generated abelian group and therefore $R_S^*/(R_S^*)^q$ is a finite group, we may and do assume that the units $u_i$ belong to a finite set $U$ of units in the ring $R_S$.  So, by the pigeonhole principle (because there exist infinitely many solutions $(m,n)\in\N_0\times\N_0$ for the equation $P(n)=s^m$), there exist $u,v\in U$ for which there exist infinitely many $(z,w)\in R_S\times R_S$ such that 
\begin{equation}
\label{equation curve Siegel}
uz^q-vw^q=r_1-r_2.
\end{equation}
Because $q\ge 3$ and $r_1\ne r_2$, the equation \eqref{equation curve Siegel} in the variables $(z,w)$ yields a curve of positive genus which cannot contain infinitely many $S$-integral points, by Siegel's theorem \cite{Siegel}. This concludes our proof that $P(x)$ has only one distinct root.
\end{proof}

The following fact is an easy consequence of Proposition~\ref{prop:Siegel}. 

\begin{corollary}
\label{cor:Siegel}
Let $p$ be a prime number, let $k\in\N_0$, let $a,b\in\Q$, and let $P\in\Q[x]$ of degree $d\ge 1$. The set $T$ consisting of all $n\in\N_0$ for which there exists $m\in\N_0$ such that $P(n)=b+ap^{mk}$ is a finite union of $p$-arithmetic sequences.
\end{corollary}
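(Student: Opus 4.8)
The statement asks us to describe the set $T=\{n\in\N_0\colon \exists m\in\N_0,\ P(n)=b+ap^{mk}\}$ for a polynomial $P\in\Q[x]$ of degree $d\ge 1$. The plan is first to dispose of the degenerate cases $k=0$ (then $T$ is the full solution set in $\N_0$ of $P(n)=b+a$, hence finite, hence a finite union of singletons, each a $p$-arithmetic sequence) and $a=0$ (then $T=\{n\colon P(n)=b\}$, again finite). So we may assume $k\ge 1$ and $a\ne 0$. Write $s:=p^k$, a nonzero rational that is not a root of unity, and observe that $P(n)=b+ap^{mk}$ says $a^{-1}(P(n)-b)=s^m$. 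Set $Q(x):=a^{-1}(P(x)-b)\in\Q[x]$, a polynomial of degree $d$; we want the set of $n$ for which $Q(n)=s^m$ for some $m$.

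\textbf{Main dichotomy via Siegel.} If there are only finitely many such $n$, then $T$ is finite and we are done, so assume there are infinitely many pairs $(m,n)$ with $Q(n)=s^m$. By Proposition~\ref{prop:Siegel} applied to $Q$ and $s$ (legitimate since $s$ is not a root of unity), we conclude $Q(x)=a'(x-b')^d$ for some $a',b'\in\Qbar$; comparing with $Q\in\Q[x]$ of degree $d$ forces $a',b'\in\Q$ (the leading coefficient of $Q$ is rational, and $b'=-\tfrac{1}{d}\cdot\tfrac{(\text{next coefficient})}{a'}\in\Q$). Unwinding, $P(x)=b+a a'(x-b')^d$. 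Thus $P(n)=b+ap^{mk}$ becomes $a a'(n-b')^d=ap^{mk}$, i.e. $(n-b')^d=(a')^{-1}p^{mk}$. So the problem reduces to: describe the set of $n\in\N_0$ such that $(n-b')^d$ is of the form $c\,p^{mk}$ for a fixed nonzero rational $c=(a')^{-1}$ and some $m\in\N_0$.

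\textbf{Reduction to an explicit $p$-adic condition.} For this final reduction I would argue as follows. Write $n-b'=r$; we need $r^d=c\,p^{mk}$. Taking $p$-adic valuations (after clearing denominators, or working in $\Q_p$), this forces $d\cdot v_p(r)=v_p(c)+mk$, and for the ``units away from $p$'' part it forces $|r|_\ell^d=|c|_\ell$ for every prime $\ell\ne p$ and at the archimedean place $|r|^d=|c|\,p^{mk}$ has a fixed sign pattern; in particular $r$ is determined up to finitely many choices once $m$ is fixed modulo $d$ and $v_p(r)$ is determined. Concretely: there are only finitely many residues $m_0\in\{0,\dots,d-1\}$ for which $v_p(c)+m_0 k\equiv 0\pmod d$, and for each such $m_0$, writing $m=dm'+m_0$, the equation $r^d=c\,p^{k(dm'+m_0)}$ has, for each $m'$, at most $d$ solutions $r\in\Q$ (the $d$-th roots of a fixed rational), and a solution exists for all $m'$ in a fixed residue class or for none, depending on whether $c\,p^{km_0}$ is a $d$-th power in $\Q$. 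When it is, one solution is $r=c_0^{1/d}\,p^{km'}$ for a fixed rational $c_0^{1/d}$, and the others differ by $d$-th roots of unity in $\Q$, i.e. by a sign when $d$ is even — giving $n=b'+c_0^{1/d}p^{km'}$ and possibly $n=b'-c_0^{1/d}p^{km'}$. Each of these is a $p$-arithmetic sequence (in the variable $m'$, with base $p^k$), and there are finitely many of them; finally intersecting with $\N_0$ and invoking Proposition~\ref{prop:p-arithmetic and arithmetic}(b) keeps us within a finite union of $p$-arithmetic sequences. Hence $T$ is a finite union of $p$-arithmetic sequences.

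\textbf{Expected main obstacle.} The conceptual heart is the appeal to Siegel's theorem through Proposition~\ref{prop:Siegel}, which is already available; the remaining work is the elementary but slightly fiddly bookkeeping in the last paragraph — keeping track of $d$-th roots, valuations at $p$, the residue of $m$ modulo $d$, and the sign ambiguities — and checking that every family produced is genuinely a $p$-arithmetic sequence with rational parameters. I expect the cleanest writeup to push as much as possible into the statement ``the $d$-th roots of $c\,p^{km}$ lie in $\Q$ for $m$ in a fixed union of residue classes mod $d$, and then depend polynomially-exponentially on $m$,'' so that the final assembly is immediate from Definition~\ref{p-arithmetic sequences}.
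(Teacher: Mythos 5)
Your argument follows the same route as the paper: reduce via Proposition~\ref{prop:Siegel} to the case $P(x)-b = a\cdot a'(x-b')^d$ with a single (necessarily rational) root, and then track $p$-adic valuations and $d$-th roots to exhibit the solutions as $p$-arithmetic sequences, finishing with Proposition~\ref{prop:p-arithmetic and arithmetic}(b). The paper compresses the final bookkeeping to ``there exists $\ell\in\Z$ such that $Ap^{-\ell}$ is a $d$-th power of a rational; hence $P_1(x)=p^{\ell}(ex-f)^d$ and the conclusion follows easily,'' whereas you spell out the residue-of-$m$ modulo $d$ and sign analysis explicitly; the content is the same.
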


\begin{proof}
First we note that if $T$ is finite, then the conclusion is obvious; so, from now on, we assume $T$ is infinite. In particular, this yields that both $m$ and $a$ are nonzero. 


By Proposition~\ref{prop:Siegel} we obtain that $P_1(x):=(P(x)-b)/a$ has only one root, which must be thus a rational number (since $P\in\Q[x]$); so, we let $P_1(x)=A(x-c)^d$ for some $c,A\in \Q$. Since there exist infinitely many $n\in\N_0$ such that $P_1(n)=p^{km}$, we conclude that there exists some $\ell\in\Z$ such that $Ap^{-\ell}$ is the $d$-th power of a rational number; hence $P_1(x)=p^{\ell}(ex-f)^d$ for some $e,f\in\Q$ and then the conclusion follows easily (see also part~(b) of Proposition~\ref{prop:p-arithmetic and arithmetic}).     
\end{proof}

\begin{proposition}
\label{prop:solutions of p-arithmetic sequences}
Let $\{u_n\}_{n\in\N_0}\subset \Z$ be a linear recurrence sequence, let $p$ be a prime number, let $k\in\N_0$, and let $a,b\in\Q$. Then the set 
$$T=\{n\in\N_0\colon u_n=b+ap^{km}\text{ for some }m\in\N_0\}$$
is a finite union of arithmetic progressions along with a finite union of $p$-arithmetic sequences.
\end{proposition}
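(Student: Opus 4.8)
The plan is to reduce the problem to the non-degenerate case, then split the characteristic roots according to whether they are roots of unity, powers related to $p$, or "generic," and handle each interaction with the equation $u_n = b + ap^{km}$ separately using the tools already assembled (Skolem--Mahler--Lech via Proposition~\ref{Skolem result}, and the Siegel-type Corollary~\ref{cor:Siegel}).

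First I would dispose of trivial cases: if $a = 0$ or $k = 0$, then $b + ap^{km}$ takes only finitely many values, so $T$ is a finite union of arithmetic progressions by Proposition~\ref{Skolem result}, and we are done. So assume $a \neq 0$ and $k \geq 1$. Next, by passing from $\{u_n\}$ to the finitely many subsequences $\{u_{nM+\ell}\}$ for $\ell = 0, \dots, M-1$ (and using that a finite union of arithmetic progressions intersected with an arithmetic progression is again such, and similarly for $p$-arithmetic sequences by Proposition~\ref{prop:p-arithmetic and arithmetic}(a)), I may assume $\{u_n\}$ is a \emph{non-degenerate} linear recurrence sequence given by a formula $u_n = \sum_{i=1}^s P_i(n) r_i^n$ valid for all large $n$ (the finitely many small $n$ contribute at most a finite set). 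Now the key dichotomy: either the sequence $\{u_n\}$ is "eventually a polynomial," i.e. $s = 1$ and $r_1 = 1$ (so $u_n = P_1(n)$ for large $n$), or not.

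In the polynomial case, $u_n = P(n)$ for large $n$ with $P \in \Q[x]$; if $\deg P = 0$ we are back to finitely many values, and if $\deg P \geq 1$ then $T$ is a finite union of $p$-arithmetic sequences by Corollary~\ref{cor:Siegel}. The remaining case is where $\{u_n\}$ genuinely grows/oscillates: then I would argue that for the equality $u_n = b + a p^{km}$ to have solutions with $m \to \infty$, the dominant behavior of $u_n$ must be governed by a single characteristic root which is (a rational power of) $p$ — more precisely, one shows that the set of $n$ for which such an $m$ exists, when infinite, forces $\{u_n\}$ restricted to a suitable arithmetic subprogression to satisfy $u_n = c \cdot (p^{t})^{n} \cdot (\text{root of unity, hence }1) + (\text{lower order})$, and the lower-order terms must then match $b$ exactly for infinitely many $n$, which by non-degeneracy and Skolem (Proposition~\ref{Skolem result} applied to the "error" recurrence $u_n - b - a p^{k \cdot (\text{candidate } m(n))}$, suitably interpreted) pins down $m$ as a linear function of $n$ on each subprogression. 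Concretely: if $u_n = b + a p^{km}$ infinitely often, then $u_n - b = a p^{km}$ is itself, on that set, a value of the form (constant)$\times$(power of $p$), so applying growth estimates for linear recurrence sequences (Evertse/van der Poorten-type, or elementarily via the leading term) shows $m = m(n)$ is eventually an affine function of $n$ on each residue class, whence $n \mapsto u_n$ agrees with an explicit exponential-in-$n$ expression, and comparing with the recurrence formula forces all other $r_i$-contributions to vanish, reducing to the case $u_n = \beta \gamma^n + \delta$ with $\gamma$ a rational power of $p$ — which is directly a $p$-arithmetic sequence (up to the intersection with $\N_0$, handled by Proposition~\ref{prop:p-arithmetic and arithmetic}(b)).

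\textbf{The main obstacle} I anticipate is making rigorous the claim that solutions to $u_n = b + a p^{km}$ force the recurrence to be "essentially a single $p$-power exponential": one needs a quantitative lower bound on $\bigl| u_n - c \cdot p^{\text{something}} \bigr|$ coming from the subdominant characteristic roots (this is where non-degeneracy is essential, and where one typically invokes a lower bound for linear recurrences away from their dominant term, or else a Subspace-Theorem/Siegel-type input). An alternative, possibly cleaner route that avoids sharp Diophantine estimates: observe that for each fixed residue $\ell \bmod k$, the condition "$u_n - b$ is $a$ times a $k$-th power of $p$ in the appropriate sense" can be rephrased, after raising to a suitable power and clearing denominators, as an $S$-unit-type or polynomial-exponential equation $u_n - b = a p^{k\ell} \cdot (p^k)^{m'}$, and then treat $\{u_n\}$ and $\{a p^{k\ell}(p^k)^{m'}\}$ as two linear recurrence sequences and intersect their ranges — but this requires care since the correspondence $n \leftrightarrow m$ is not a priori affine. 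I would therefore lead with the growth-estimate approach, isolating the dominant root, and fall back on Corollary~\ref{cor:Siegel} and Proposition~\ref{Skolem result} to mop up the polynomial and the finite pieces.
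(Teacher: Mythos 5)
Your overall skeleton matches the paper's: dispose of $a=0$ or $k=0$ via Proposition~\ref{Skolem result}, reduce to a non-degenerate recurrence by passing to subprogressions, handle the polynomial case $u_n=P_1(n)$ via Corollary~\ref{cor:Siegel}, and then confront the genuinely exponential case. The divergence, and the gap, is in that last case.

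You lead with a ``growth-estimate'' strategy --- isolating a dominant characteristic root and arguing that subdominant terms must vanish --- but you never actually execute it, and you yourself flag that it needs a quantitative lower bound on $\bigl|u_n - c\,p^{(\cdot)}\bigr|$ that would have to come from a Subspace-Theorem-type input. As written this is a genuine gap, not a proof: non-degeneracy alone does not give you the required separation of the dominant term, and nothing in the toolkit assembled so far (Skolem--Mahler--Lech, Siegel via Corollary~\ref{cor:Siegel}) delivers such a lower bound. The sentence ``comparing with the recurrence formula forces all other $r_i$-contributions to vanish'' is the entire content of the hard case, and it is asserted, not proved.

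The route you mention only as an alternative and then set aside --- view $v_m := b + ap^{km}$ as a linear recurrence sequence in $m$ and intersect the ranges of the two non-degenerate sequences $\{u_n\}$ and $\{v_m\}$ --- is in fact the paper's argument, and the ``care'' you worry about (the correspondence $n\leftrightarrow m$ not being a priori affine) is exactly what the relevant black box handles. The paper invokes \cite[Theorem~11.2, p.~209]{Sch03}, a structure theorem for pairs of non-degenerate recurrences with infinitely many coincidences $u_n=v_m$: it forces $u_n$ to be \emph{related} to $v_m$, concretely $u_n = b + Q(n)(\pm p^\gamma)^n$ for some $\gamma\in\N$ and $Q\in\Q[x]$. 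From there the equation $u_n=v_m$ collapses (after splitting $n$ by parity to absorb the sign) to $Q(2n)=ap^{km-2\gamma n}$, which is finished by Corollary~\ref{cor:Siegel} when $\deg Q\ge 1$ and is an arithmetic progression when $Q$ is constant. So: your structural instincts are right, and you even located the correct alternative framing, but you chose to develop the sketchier branch; to close the argument you should have pursued the ``two recurrences with infinitely many common values'' framing and located the appropriate theorem (Schmidt's Theorem~11.2) that makes it rigorous.
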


\begin{proof}
If $k=0$ or $a=0$, then the conclusion follows immediately from Proposition~\ref{Skolem result}. So, from now on, we assume that both $a$ and $k$ are nonzero. Also, we assume the set $T$ is infinite (otherwise, the conclusion is obvious).

Because $\{u_n\}_{n\in\N_0}$ is a linear reccurence sequence, then at the expense of replacing $\{u_n\}$ by finitely many linear recurrence sequences (obtained by replacing $n$ by a suitable arithmetic progression, as in Section~\ref{section:linear recurrence sequence}), we may assume the sequence $\{u_n\}$ is non-degenerate and for (at least sufficiently large) $n\in\N_0$, we have that $u_n=\sum_{i=1}^s P_i(n)r_i^n$ for some distinct $r_i\in\Qbar^*$ and some nonzero polynomials $P_i(x)\in\Qbar[x]$. Furthermore, if some $r_i$ is a root of unity, then $r_i=1$; also, if $i\ne j$ then $r_i/r_j$ is not a root of unity.


With the above notation and convention for our linear recurrence sequence being non-degenerate, if $s=1$ and $r_1=1$, i.e., $u_n=P_1(n)$ for some polynomial $P_1\in\Q[x]$, then the conclusion follows from Corollary~\ref{cor:Siegel}. So, from now on, assume either $s>1$, or $u_n=P_1(n)r_1^n$ for some non-root of unity $r_1$. Since $v_m:=b+ap^{km}$ is itself a linear recurrence sequence, and both $\{u_n\}$ and $\{v_m\}$ are non-degenerate linear recurrence sequences which are not of the form $\alpha_0^nQ(n)$ for some root of unity $\alpha_0$, and since there exist infinitely many solutions $(m,n)$ for the equation $u_n=v_m$, then \cite[Theorem~11.2,~p.~209]{Sch03} yields that $u_n=b+Q(n)(\pm p^\gamma)^{n}$ for some $\gamma\in\N$ and some $Q(x)\in\Q[x]$. We get this conclusion because \cite[Theorem~11.2,~p.~209]{Sch03} yields that $u_n$ and $v_m$ must be related (see also \cite[(11.3)]{Sch03}) and therefore, $u_n$ has exactly one characteristic root $r$ which is not equal to $1$ and furthermore $|r|=p^\gamma$ for some nonzero $\gamma\in\Q$. Since $u_n\in\Z$, then $r=\pm p^\gamma$ and $\gamma\in\N$; so, $u_n=Q_0(n)+Q(n)r^n$. Finally, \cite[Theorem~11.2,~p.~209]{Sch03} yields that $Q_0$ must be the constant polynomial equal to $b$.  

Then solving $u_n=v_m$ yields two possibilities: either $n$ is even, or $n$ is odd. Since the latter case follows almost identically as the former case, we focus only on the case $n$ is even. So, we are solving the equation  
$$Q(2n) p^{2\gamma n}=ap^{km}$$ 
and therefore $Q(2n)=ap^{km-2\gamma n}$. If $Q(x)$ is a constant polynomial, then we get that $km-2\gamma n$ must also be constant and therefore, the set of such $n\in\N_0$ is an arithmetic progression. Now, if $\deg(Q)\ge 1$ then $|Q(2n)|>1$ for $n$ sufficiently large and therefore, except for finitely many solutions, we must have that $km \ge  2\gamma n$. Then the set of $n\in\N_0$ such that $Q(2n)$ belongs to the the $p$-arithmetic sequence $\{ap^j\}_{j\in\N_0}$ is itself a finite union of $p$-arithmetic sequences, as proven in Corollary~\ref{cor:Siegel}. 
This concludes the proof of Proposition~\ref{prop:solutions of p-arithmetic sequences}.
\end{proof}

\begin{proposition}
\label{prop:intersections of p-arithmetic sequences}
The intersection of two $p$-arithmetic sequences is a finite union of $p$-arithmetic sequences. 
\end{proposition}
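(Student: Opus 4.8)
The plan is a direct case analysis, reducing the statement to counting the integer solutions $(m,n)$ of a single exponential equation. First I would dispose of the degenerate cases: if $a_1=0$ or $k_1=0$ (and symmetrically for the second sequence), the corresponding $p$-arithmetic sequence is a singleton, so its intersection with any set is a sub-singleton and we are done. Hence I may assume both sequences have the form $\{a_ip^{k_in}+b_i:n\in\N_0\}$ with $a_i\in\Q\setminus\{0\}$ and $k_i\in\N$; then each map $n\mapsto a_ip^{k_in}+b_i$ is injective, so computing the intersection amounts to describing the set $T$ of pairs $(m,n)\in\N_0^2$ with $a_1p^{k_1m}-a_2p^{k_2n}=c$, where $c:=b_2-b_1$.

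If $c=0$, the equation becomes $a_1p^{k_1m}=a_2p^{k_2n}$, which is solvable only when $a_1/a_2$ is an integer power of $p$, say $a_1/a_2=p^{-\ell}$; the solutions are then exactly the pairs with $k_1m-k_2n=\ell$. An elementary discussion of this linear Diophantine equation together with the constraints $m,n\ge 0$ shows that the set of admissible $m$ is either empty or an arithmetic progression $\{e+dt:t\in\N_0\}$ with $d=k_2/\gcd(k_1,k_2)$; reindexing, the intersection then equals $\bigl\{(a_1p^{k_1e})(p^{k_1d})^{t}+b_1:t\in\N_0\bigr\}$, a single $p$-arithmetic sequence.

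The case $c\ne 0$ is where the real content lies, and the claim is that $T$ is \emph{finite}, so the intersection is a finite union of singletons. After multiplying the equation by $-1$ if necessary, either $a_1,a_2$ have opposite signs, in which case $|a_1|p^{k_1m}+|a_2|p^{k_2n}=|c|$ bounds both $m$ and $n$; or $a_1,a_2>0$. In the latter situation, fixing $m$ determines $n$ and vice versa, so were $T$ infinite it would contain solutions with $m\to\infty$, forcing $n\to\infty$ as well, and then $\frac{a_1}{a_2}p^{k_1m-k_2n}=\frac{a_1p^{k_1m}}{a_1p^{k_1m}-c}\to 1$. Since the exponents $k_1m-k_2n$ are integers converging to a real limit, they are eventually equal to a fixed integer $j_0$ with $a_1/a_2=p^{-j_0}$; substituting back collapses the left-hand side to $0\ne c$, a contradiction. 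Hence $T$ is finite.

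The main obstacle is precisely this last step — excluding infinitely many solutions when $c\ne 0$ and the leading coefficients have the same sign; everything else is routine bookkeeping about congruences and reindexing. The crux of that step is the observation that $\frac{a_1}{a_2}p^{k_1m-k_2n}\to 1$ forces the integer exponent $k_1m-k_2n$ to be eventually constant (integers tending to a real limit are eventually constant and equal to that limit), which then forces $c=0$. As an alternative to this ad hoc estimate, after clearing a common denominator of $a_1,b_1,a_2,b_2$ one may regard $u_m=a_1p^{k_1m}+b_1$ as an integer linear recurrence sequence and invoke Proposition~\ref{prop:solutions of p-arithmetic sequences}; its output contains no genuine $p$-arithmetic part here because the polynomial coefficient of $u_m$ is the constant $a_1$, so the set of admissible $m$ is a finite union of arithmetic progressions, and reindexing as in the $c=0$ case finishes the proof.
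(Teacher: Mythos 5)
Your proof is correct, and it takes a genuinely more elementary route than the paper's. The paper disposes of the hard case (the analogue of your $c\ne 0$) by invoking Schmidt's structural theorem on common values of two non-degenerate linear recurrence sequences (\cite[Proposition~11.2, p.~209]{Sch03}), which, given infinitely many solutions, forces $b_1=b_2$; after that, the paper reduces to a linear equation $k_1 n_1 = k_2 n_2 + \ell$ in the exponents, much as you do in your $c=0$ case. You replace the appeal to Schmidt by a self-contained estimate: writing $\frac{a_1}{a_2}p^{\,k_1 m - k_2 n}=\frac{a_1 p^{k_1 m}}{a_1 p^{k_1 m}-c}\to 1$ along any infinite family of solutions (after reducing to the same-sign case), noting that a convergent sequence of integers is eventually constant, so the exponent difference stabilizes at the unique integer $j_0$ with $a_1/a_2=p^{-j_0}$, and observing that back-substitution then collapses the left-hand side to $0$, contradicting $c\ne 0$. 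This is a clean, elementary substitute for the cited machinery; Schmidt's theorem buys the paper brevity and uniformity (and is already used elsewhere), whereas your argument buys transparency and avoids a nontrivial external citation. Your $c=0$ analysis (linear Diophantine congruence in $m$, then reindexing an arithmetic progression of exponents into a single $p$-arithmetic sequence) matches the paper's, and your closing remark that one could instead regard $u_m=a_1p^{k_1m}+b_1$ as a recurrence and invoke Proposition~\ref{prop:solutions of p-arithmetic sequences} is essentially the route the paper gestures at in its final sentence.
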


\begin{proof}
The result is a very simple consequence of Vojta's powerful theorem \cite{Voj96}; however, we can easily prove it directly, as shown in the next few lines. 

So, given $a_i,b_i\in\Q$ and $k_i\in\N_0$ for $i=1,2$, we look for solutions in $(n_1,n_2)\in\N_0\times\N_0$ of the equation 
\begin{equation}
\label{a b k i}
b_1+a_1p^{k_1n_1}=b_2+a_2p^{k_2n_2}. 
\end{equation}
Now, if some $a_i$ or some $k_i$ equals $0$, then the conclusion is immediate. So, from now on, we assume each $a_i$ and each $k_i$ is nonzero. Also, we assume there exist infinitely many solutions to equation \eqref{a b k i} since otherwise the conclusion is obvious.

The existence of infinitely many  solutions to \eqref{a b k i} coupled with \cite[Proposition~11.2,~p.~209]{Sch03} yields that $b_1=b_2$. Then there must be some $\ell\in\Z$ such that $\frac{a_2}{a_1}=p^\ell$. So, equation \eqref{a b k i} reads now: $k_1n_1=k_2n_2+\ell$. The conclusion of Proposition~\ref{prop:solutions of p-arithmetic sequences} follows readily. 
\end{proof}


\section{Proof of our main result}
\label{section proof}

The main ingredient of our proof is the following result.
\begin{theorem}
\label{combinatorial theorem}
Let $(G,+)$ be a finitely generated abelian group, let $y\in G$, and let $\Phi_0:G\lra G$ be a group homomorphism with the property that there exist $\ell\in\N$ and also there exist $c_0,\dots, c_{\ell-1}\in\Z$ such that
\begin{equation}
\label{equation Phi 0}
\Phi_0^\ell(x)+c_{\ell-1}\Phi_0^{\ell-1}(x)+\cdots + c_1\Phi_0(x)+c_0x=0\text{ for all }x\in G.
\end{equation}
Let $\Phi:G\lra G$ be given by $\Phi(x)=y+\Phi_0(x)$ for each $x\in G$, and also let $\alpha\in G$.
\begin{enumerate}
\item[(A)] Let $R\in G$ and let $H$ be a subgroup of $G$. Then the set $S$ of all $n\in\N_0$ such that $\Phi^n(\alpha)\in (R+H)$ is a finite union of arithmetic progressions. 
\item[(B)] Let $p$ be a prime number, let $k\in\N_0$, let $R_1,R_2\in G$, and let
$$C=\{R_1+p^{nk}R_2\colon n\in\N_0\}.$$
Then the set $S$ of all $n\in\N_0$ such that $\Phi^n(\alpha)\in C$ is a finite union of $p$-arithmetic sequences along with a finite union of arithmetic progressions. 
\end{enumerate}
\end{theorem}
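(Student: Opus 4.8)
The plan is to realize $\{\Phi^n(\alpha)\}_{n\in\N_0}$ as a linear recurrence sequence with integer coefficients valued in $G$, and then to extract from it $\Z$-valued linear recurrence sequences to which the results of Section~\ref{section arithmetic sequences} apply: Skolem's Proposition~\ref{Skolem result} for part~(A), and Proposition~\ref{prop:solutions of p-arithmetic sequences} for part~(B). First I would record the structural input: iterating $\Phi(x)=y+\Phi_0(x)$ gives $\Phi^n(\alpha)=\Phi_0^n(\alpha)+\sum_{i=0}^{n-1}\Phi_0^i(y)$, the sequence $\Phi_0^n(\alpha)$ is annihilated by the recurrence operator $x^\ell+c_{\ell-1}x^{\ell-1}+\cdots+c_0$ by virtue of \eqref{equation Phi 0}, the partial sums $\sum_{i=0}^{n-1}\Phi_0^i(y)$ are therefore annihilated by $(x-1)(x^\ell+c_{\ell-1}x^{\ell-1}+\cdots+c_0)$, and hence $a_n:=\Phi^n(\alpha)$ satisfies a homogeneous linear recurrence over $\Z$ of order at most $\ell+1$.

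I would also isolate the following elementary \emph{sub-lemma}: if $\{v_n\}_{n\in\N_0}$ is a sequence in a finitely generated abelian group $A$ satisfying an integer linear recurrence, and $a\in A$, then $\{n\colon v_n=a\}$ is a finite union of arithmetic progressions. Indeed, writing $A\cong\Z^r\oplus T$ with $T$ finite, each of the $r$ coordinate sequences is a linear recurrence sequence in $\Z\subseteq\C$, so by Proposition~\ref{Skolem result} the conditions imposed by the free coordinates each cut out a finite union of arithmetic progressions; the $T$-component satisfies the same integer recurrence over the finite group $T$, so the sequence of its length-$d$ state vectors is eventually periodic, whence the condition imposed by $T$ cuts out an eventually periodic set; and the intersection of these finitely many sets --- each a finite union of arithmetic progressions --- is again a finite union of arithmetic progressions. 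Part~(A) is then immediate: passing to the quotient $G\to G/H$ (under which $\Phi_0$ descends to a homomorphism still satisfying a relation of the shape \eqref{equation Phi 0}, since the quotient map is a homomorphism), the condition $\Phi^n(\alpha)\in R+H$ reads $\bar a_n=\bar R$ in the finitely generated abelian group $G/H$, so the sub-lemma applies.

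For part~(B) I would split according to whether $C$ is finite. If $C$ is finite then $S$ is a finite union of sets $\{n\colon a_n=g\}$, each a finite union of arithmetic progressions by the sub-lemma. If $C$ is infinite then necessarily $k\ge 1$ and $R_2$ has infinite order, so $\langle R_2\rangle\cong\Z$; put $w_n:=a_n-R_1$, again an integer linear recurrence sequence valued in $G$. By part~(A) the set $\{n\colon w_n\in\langle R_2\rangle\}$ is a finite union of arithmetic progressions $\mathcal P_1,\dots,\mathcal P_t$, and $S\subseteq\bigcup_j\mathcal P_j$; after discarding the singleton $\mathcal P_j$'s (which contribute only a finite subset of $S$) we may write $\mathcal P_j=\{\beta_j n+\gamma_j\colon n\in\N_0\}$ with $\beta_j\ge 1$. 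Along $\mathcal P_j$ the values $w_{\beta_j n+\gamma_j}$ lie in $\langle R_2\rangle$, so writing $w_{\beta_j n+\gamma_j}=h_j(n)\,R_2$ defines a $\Z$-valued sequence $h_j$, which is a linear recurrence sequence (a subsequence along an arithmetic progression of the integer linear recurrence sequence obtained from $w_n$ via the isomorphism $\langle R_2\rangle\cong\Z$). Now $\beta_j n+\gamma_j\in S$ precisely when $h_j(n)=p^{km}$ for some $m\in\N_0$, so Proposition~\ref{prop:solutions of p-arithmetic sequences} (applied with $a=1$, $b=0$) shows that the set of such $n$ is a finite union of arithmetic progressions together with a finite union of $p$-arithmetic sequences; finally, the affine reindexing $n\mapsto\beta_j n+\gamma_j$ carries arithmetic progressions to arithmetic progressions and $p$-arithmetic sequences to $p$-arithmetic sequences, because $\beta_j(ap^{km}+b)+\gamma_j=(\beta_j a)p^{km}+(\beta_j b+\gamma_j)$ with $\beta_j a,\,\beta_j b+\gamma_j\in\Q$. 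Taking the union over $j$ establishes part~(B).

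The genuinely routine steps are the sub-lemma and the quotient reduction in part~(A). The step that needs care is the two-stage argument in part~(B): Proposition~\ref{prop:solutions of p-arithmetic sequences} cannot be applied to $w_n$ directly since $w_n$ takes values in $G$ rather than in $\Z$, so one must first use part~(A) to locate the arithmetic progressions along which $w_n$ falls into the rank-one subgroup $\langle R_2\rangle$, and only afterward pass to the resulting $\Z$-valued coordinate sequence; one must also verify, as indicated above, that the concluding affine reindexing of the index set does not destroy the class of $p$-arithmetic sequences.
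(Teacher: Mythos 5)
Your proposal is correct, and it takes a genuinely different and somewhat cleaner route than the paper's. The paper proves a Claim expressing $\Phi^n(\alpha)$ explicitly as a $\Z$-linear combination of fixed group elements with linear recurrence coefficients, then decomposes $G=G_0\oplus G_1$ into torsion and free parts and grinds through the coordinates $u_{i,n}$, $v_{i,n}$, $Q_{i,0}$, $T_{i,0}$, $b_{i,j}$, $a_{i,j}$, arriving at $\Z$-valued equations to which Skolem's theorem and Propositions~\ref{prop:p-arithmetic and arithmetic}, \ref{prop:solutions of p-arithmetic sequences}, \ref{prop:intersections of p-arithmetic sequences} are applied. You instead record once and for all that $a_n=\Phi^n(\alpha)$ is a $G$-valued sequence annihilated by the monic integer operator $(T-1)\cdot(T^\ell+c_{\ell-1}T^{\ell-1}+\cdots+c_0)$, prove a general sub-lemma about level sets of integer-recurrence sequences in finitely generated abelian groups, and then handle part~(A) by passing to $G/H$ and part~(B) by first using part~(A) to locate, via finitely many arithmetic progressions, where $a_n-R_1$ lies in the rank-one group $\langle R_2\rangle\cong\Z$, and only then reading off a $\Z$-valued recurrence to feed into Proposition~\ref{prop:solutions of p-arithmetic sequences}. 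This two-stage reduction to the cyclic subgroup $\langle R_2\rangle$ is the main structural difference; it avoids coordinatizing $G$ at all and makes the role of the torsion part almost invisible (it is absorbed into the sub-lemma and into part~(A)). What the paper's more explicit computation buys is that it is closer in spirit to the $F$-set calculations of \cite{groups} that the author reuses; what your version buys is that the argument for (B) becomes a near-immediate corollary of (A) plus the one-variable Proposition~\ref{prop:solutions of p-arithmetic sequences}.

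One small inaccuracy you should strike: in part~(A) you assert parenthetically that $\Phi_0$ descends to an endomorphism of $G/H$. This is false in general, since nothing forces $\Phi_0(H)\subseteq H$ (take $G=\Z^2$, $\Phi_0(a,b)=(b,a)$, $H=\Z\times\{0\}$). Fortunately you never use it: the only thing you need is that the image sequence $\bar a_n$ in $G/H$ satisfies the same integer linear recurrence as $a_n$, which holds simply because reduction modulo $H$ is a group homomorphism; then the sub-lemma applies to $\bar a_n$ directly. You might also make explicit, when passing to the subsequence $w_{\beta_j n+\gamma_j}$, the standard fact that a decimation along an arithmetic progression of a $G$-valued sequence annihilated by a monic integer polynomial $P(T)$ is again annihilated by a monic integer polynomial (take the characteristic polynomial of $M^{\beta_j}$, $M$ the companion matrix of $P$); since the values now lie in $\langle R_2\rangle\cong\Z$ and the recurrence is $\Z$-linear, the coordinate sequence $h_j$ inherits an integer recurrence. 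With those two points tidied, the argument is complete.
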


\begin{proof}
We first prove the following important claim which will be key for both parts of our proposition.
\begin{claim}
\label{claim:linear recurrence sequence}
There exist linear recurrence sequences $\{u_{i,n}\}_{n\in\N_0}\subset \Z$ for $1\le i\le \ell$ and $\{v_{i,n}\}_{n\in\N_0}\subset \Z$ for $0\le i\le \ell-1$ such that for each $n\in\N_0$, we have
$$
\Phi^n(\alpha)=\sum_{i=1}^\ell u_{i,n}\left(\sum_{j=0}^{i-1} \Phi_0^j(y)\right) + \sum_{i=0}^{\ell-1} v_{i,n}\Phi_0^i(\alpha).$$
\end{claim}

\begin{proof}[Proof of Claim~\ref{claim:linear recurrence sequence}.]
For each $n\in\N_0$ we have that
$$\Phi^n(\alpha)=\left(\sum_{i=0}^{n-1} \Phi_0^i(y)\right) + \Phi_0^n(\alpha).$$
Due to \eqref{equation Phi 0}, we get that the sequence of points $\{\Phi_0^n(\alpha)\}_{n\in\N_0}$ satisfies the linear recurrence relation:
\begin{equation}
\label{equation Phi 0 1}
\Phi_0^{n+\ell}(\alpha) + \sum_{i=0}^{\ell-1} c_i \Phi_0^{n+i}(\alpha)=0.
\end{equation}
So, there exist $\ell$ linear recurrence sequences $\{v_{i,n}\}_{n\in\N_0}\subset \Z$, each one satisfying the linear reccurence formula:
\begin{equation}
\label{equation v}
v_{i,n+\ell} + \sum_{j=0}^{\ell-1}c_jv_{i,n+j}=0\text{ for each }n\in\N_0\text{ and for each }0\le i\le \ell-1,
\end{equation}
such that 
\begin{equation}
\label{equation Phi 0 2}
\Phi_0^n(\alpha) = \sum_{i=0}^{\ell-1}v_{i,n}\Phi_0^i(\alpha).
\end{equation}
We let $Q_n:=\sum_{i=0}^{n-1} \Phi_0^i(y)\in G$ for $n\ge 1$, and also let $Q_0:=0$. Because of the linear recurrence formula \eqref{equation Phi 0 1} satisfied by the sequence $\{\Phi_0^n(\alpha)\}_{n\in\N_0}$, we conclude that the sequence $\{Q_n\}_{n\in\N_0}$ satisfies the following linear recurrence sequence:
\begin{equation}
\label{equation Q n}
Q_{n+\ell+1} + (c_{\ell-1}-1)Q_{n+\ell}  + \cdots  + (c_0-c_1)Q_{n+1}-c_0Q_n=0.
\end{equation}
The linear recurrence relation \eqref{equation Q n} yields the existence of the linear recurrence sequences $\{u_{i,n}\}_{n\in\N_0}\subset \Z$ for $1\le i\le \ell$, each of them satisfying the recurrence relation:
\begin{equation}
\label{equation u}
u_{i,n+\ell+1} + (c_{\ell-1}-1)u_{i,n+\ell}  + \cdots + (c_0-c_1)u_{i,n+1} - c_0u_{i,n}=0 
\end{equation}
such that 
\begin{equation}
\label{equation Phi 0 3}
Q_n=\sum_{i=1}^\ell u_{i,n}Q_i.
\end{equation}
This concludes the proof of Claim~\ref{claim:linear recurrence sequence}.
\end{proof}
Now we proceed to proving the two parts of our proposition.

{\bf Part (A).} Since $G$ is a finitely generated abelian group, we know that $G$ is isomorphic to a direct sum of a finite subgroup $G_0$ with a subgroup $G_1$ which is isomorphic to $\Z^r$ for some $r\in\N_0$. We let $\{P_1,\dots, P_r\}$ be a $\Z$-basis for $G_1$. We proceed with the notation from Claim~\ref{claim:linear recurrence sequence} and write
$$\Phi^n(\alpha)=\sum_{i=1}^\ell u_{i,n}Q_i + \sum_{i=0}^{\ell-1}v_{i,n}\Phi_0^i(\alpha),$$ 
and then we write each $Q_i$ (for $i=1,\dots, \ell$) as $Q_{i,0}+\sum_{j=1}^r b_{i,j}P_j$ with $Q_{i,0}\in G_0$ and each $b_{i,j}\in\Z$, and also write each $\Phi_0^i(\alpha)$ (for $i=0,\dots, \ell-1$) as $T_{i,0}+\sum_{j=1}^r a_{i,j}P_j$ with $T_{i,0}\in G_0$ and each $a_{i,j}\in\Z$. We also write 
$$R=R_0+\sum_{j=1}^r d_jP_j\text{ with }R_0\in G_0\text{ and each }d_j\in\Z.$$
We will write the conditions satisfied by the $u_{i,n}$ and $v_{i,n}$ in order for $\Phi^n(\alpha)\in (R + H)$, or equivalently that $\Phi^n(\alpha) - R\in H$.  In order to do this, we let $H_1=H\cap G_1$, which is also a free abelian finitely generated group, of rank $s\le r$. Then $H$ is a union of finitely many cosets $U_i + H_1$ with $U_i\in G_0$. Now, for a given $U:=U_i$, the condition that $\Phi^n(\alpha)-R\in (U + H_1)$, i.e., that $\Phi^n(\alpha) - R - U \in H_1$ can be written as follows:
\begin{equation}
\label{the condition 1}
\sum_{i=1}^\ell u_{i,n}Q_{i,0} + \sum_{i=0}^{\ell-1} v_{i,n}T_{i,0} =R_0 + U
\end{equation}
and
\begin{equation}
\label{the condition 2}
\sum_{i=1}^\ell u_{i,n}\sum_{j=1}^r b_{i,j}P_j + \sum_{i=0}^{\ell-1} v_{i,n}\sum_{j=1}^r a_{i,j}P_j - \sum_{j=1}^r d_jP_j\in H_1.
\end{equation}
Now, the set of all $n\in\N_0$ which satisfy condition \eqref{the condition 1} is a finite union of arithmetic progressions (with the understanding, as always, that a singleton is an arithmetic progression of common difference equal to $0$). Indeed, this claim holds since each of the points $Q_{i,0}$, $T_{i,0}$, $R_0$ and $U$ belong to a finite group, and so they all have finite order bounded by $|G_0|$. Furthermore, it is a standard fact that any linear recurrence sequence is preperiodic modulo any given positive integer (see also the proof of \cite[Claim~3.3]{groups}). 

So, in order to finish the proof of part~(A) of Theorem~\ref{combinatorial theorem}, it suffices to prove that the set of all $n\in\N_0$ satisfying condition \eqref{the condition 2} is also a finite union of arithmetic progressions; clearly, an intersection of two arithmetic progressions is also an arithmetic progression and this would finish our argument for part~(A) of our theorem. Collecting coefficients of each $P_j$ for $j=1,\dots, r$, we get
$$w_{j,n}:=\sum_{i=1}^\ell b_{i,j}u_{i,n} + \sum_{i=0}^{\ell-1} a_{i,j} v_{i,n} - d_j,$$
and since both $u_{i,n}$ and $v_{i,n}$ are linear recurrence sequences, we conclude that also each $w_{j,n}$ is a linear recurrence sequence. Now, according to \cite[Claim~3.4,~Definition~3.5,~Subclaim~3.6]{groups}, the condition~\eqref{the condition 2} is equivalent with a system formed by finitely many equations of the form
\begin{equation}
\label{the condition 3}
\sum_{j=1}^r e_j w_{j,n} = 0
\end{equation} 
for some integers $e_j$, and of the form
\begin{equation}
\label{the condition 4}
\sum_{j=1}^r f_j w_{j,n}\equiv 0\pmod{M},
\end{equation}
for some other integers $f_j$ and also some $M\in\N$. Since itself, the sequence $\{w_n\}_{n\in\N_0}$ given by $w_n:=\sum_{j=1}^r e_jw_{j,n}$ (or respectively $\sum_{j=1}^r f_jw_{j,n}$) is a linear recurrence sequence, we immediately derive that the set of all $n\in\N_0$ satisfying either one of conditions~\ref{the condition 3} or \eqref{the condition 4} is a finite union of arithmetic progressions (see also Proposition~\ref{Skolem result}).

{\bf Part (B).} We proceed with the notation  from the proof of {\bf Part (A)}   for writing $\Phi^n(\alpha)=\sum_{i=1}^\ell u_{i,n}Q_i+\sum_{i=0}^{\ell-1}v_{i,n}\Phi_0^i(\alpha)$ in terms of the basis $\{P_1,\dots, P_r\}$ of $G_1$. We also write $R_1$ and $R_2$ in terms of elements of the finite group $G_0$ and of the basis $\{P_1,\dots, P_r\}$ of $G_1$, as follows:
$$R_i=R_{i,0}+\sum_{j=1}^r d_{i,j}P_j,$$
for some $R_{i,0}\in G_0$ and integers $d_{i,j}$, for $j=1,\dots, r$ and for $i=1,2$. Then $\Phi^n(\alpha)\in C$ if and only if both of the following two conditions are met:
\begin{equation}
\label{the condition 5}
\sum_{i=1}^\ell u_{i,n}Q_{i,0} + \sum_{i=0}^{\ell-1} v_{i,n} T_{i,0} = R_{1,0} + p^{mk} R_{2,0},
\end{equation}
and
\begin{equation}
\label{the condition 6}
\sum_{i=1}^\ell u_{i,n}\sum_{j=1}^r b_{i,j}P_j + \sum_{i=0}^{\ell-1}v_{i,n}\sum_{j=1}^r a_{i,j}P_j = \sum_{j=1}^r \left(d_{1,j}+p^{mk}d_{2,j}\right)P_j,
\end{equation}
for some $m\in\N_0$. Note that for any given $m\in\N_0$, the set of solutions $n\in\N_0$ for the system of equations \eqref{the condition 5} and \eqref{the condition 6} is a union of finitely many arithmetic progressions because fgor fixed $m$  we can easily argue as in part~(A). So, from now on, we may assume $m$ is sufficiently large. 

Now, since $R_{2,0}$ is a torsion point, then there exists $s\in\N$ such that for each $e=0,\dots, s-1$ and for each $m$ in the  arithmetic progression $\{e + sj\}$ (for $j$ sufficiently large)  the right-hand side of equation \eqref{the condition 5} is constant, say that it is equal to $R^{(s,e)}$. Replacing  $m$ by $e+sm$ in both equations \eqref{the condition 5} and \eqref{the condition 6} leads to the following equations:
\begin{equation}
\label{the condition 5'}
\sum_{i=1}^\ell u_{i,n}Q_{i,0} + \sum_{i=0}^{\ell-1} v_{i,n} T_{i,0} = R^{(s,e)}
\end{equation}
and
\begin{equation}
\label{the condition 6'}
\sum_{j=1}^r w_{j,n}P_j=\sum_{j=1}^r \left(d_{1,j} + p^{ke}d_{2,j}\cdot p^{msk}\right)P_j,
\end{equation}
where $w_{j,n}:=\sum_{i=1}^\ell u_{i,n}b_{i,j} + \sum_{i=0}^{\ell-1} v_{i,n}a_{i,j}$ is a linear recurrence sequence of integers for each $j=1,\dots, r$ because each of the $v_{i,n}$ and $u_{i,n}$ are linear recurrence sequences of integers (and also $b_{i,j}$ and $a_{i,j}$ are integers). We let $b_j:=d_{1,j}$ and $a_j:=p^{ke}d_{2,j}$ for each $j=1,\dots, r$. 

Since each integer is of the form $sk+e$ for some $e\in\{0,\dots, s-1\}$, it suffices to show that for given $e$ and $s$, the set of $n\in\N_0$ satisfying simultaneously equations \eqref{the condition 5'} and \eqref{the condition 6'} is a finite union of arithmetic progressions along with a finite union of $p$-arithmetic sequences.

Because each $Q_{i,0}$ and also each $T_{i,0}$ are torsion elements, while $\{v_{i,n}\}$ and $\{u_{i,n}\}$ are linear recurrence sequences, we conclude that equation \eqref{the condition 5'} is satisfied by integers $n$ which belong to finitely many arithmetic progressions (note that linear recurrence sequences of integers are preperiodic modulo any given integer). Using Proposition~\ref{prop:p-arithmetic and arithmetic}, it suffices to prove that the set of all $n\in\N_0$ satisfying \eqref{the condition 6'} is a finite union of $p$-arithmetic sequences along with a finite union of arithmetic progressions. 

Since equation \eqref{the condition 6'} holds inside a free abelian group whose $\Z$-basis is given by the $P_j$'s, then we obtain that \eqref{the condition 6} is equivalent with the simultaneous solution of the following equations:
\begin{equation}
\label{the condition 7}
w_{j,n}=b_j+p^{msk}a_j\text{ for each $j=1,\dots, r$.}
\end{equation}  

Now, if $k=0$, then each equation \eqref{the condition 7} is of the form $w_{j,n}=a_j+b_j$ and by Proposition~\ref{Skolem result}, the set of solutions $n$ is each time a finite union of arithmetic progressions; then clearly the set of $n\in\N_0$ satisfying simultanouesly all equations \eqref{the condition 7} is also a finite union of arithmetic progressions. So, from now on, assume $k>0$ (also note that $s>0$).
 
We have two types of equations of the form \eqref{the condition 7}: either  $a_j$ equals $0$, or not. If $a_j=0$, then equation \eqref{the condition 7} reduces to solving the equation $w_{j,n}=b_j$. If $a_j\ne 0$, then letting $w_{j,n}':=(w_{j,n}-b_j)/a_j$, we get another linear recurrence sequence; moreover, equation \eqref{the condition 7} yields $w'_{j,n}=p^{skm}$. So, the system of equations \eqref{the condition 7} splits into finitely many equations of the form
\begin{equation}
\label{the condition 8}
w_{j,n}=b_j 
\end{equation}
and also finitely many equations of the form 
\begin{equation}
\label{the condition 9}
w'_{j,n}=p^{skm} 
\end{equation}
for the same $m\in\N_0$. So, at the expense of re-labelling the linear recurrence sequences $\{w_{j,n}\}$ and $\{w'_{j,n}\}$, we may write the equations \eqref{the condition 8} and \eqref{the condition 9} combined as:
\begin{eqnarray}
\label{the condition 10}
w'_{1,n}=p^{skm}\text{ for some }m\in\N_0\\
\label{the condition 11}
w'_{1,n}=w'_{2,n}=\cdots = w'_{i_0,n}\text{ and}\\
\label{the condition 12}
w_{j,n}=b_j\text{ for }i_0<j\le r.
\end{eqnarray}
The solutions $n$ to equation \eqref{the condition 10} form finitely many $p$-arithmetic sequences along with finitely many arithmetic progressions (by Proposition~\ref{prop:solutions of p-arithmetic sequences}), while the solutions $n$ to each equation \eqref{the condition 11} and \eqref{the condition 12} form finitely many arithmetic progressions (by Proposition~\ref{Skolem result}). Another application of Proposition~\ref{prop:p-arithmetic and arithmetic} along with Proposition~\ref{prop:intersections of p-arithmetic sequences} finishes the proof of  Theorem~\ref{combinatorial theorem}.
\end{proof}

Our main result is now a simple consequence of Theorem~\ref{combinatorial theorem}.
\begin{proof}[Proof of Theorem~\ref{main result}.]
Since $\Phi$ is a self-map of $\bG_m^N$, then there exists $y\in \bG_m^N(K)$ and there exists a group endomorphism $\Phi_0$ of $\bG_m^N$ such that $\Phi(x)=y\cdot \Phi_0(x)$ for each $x\in\bG_m^N(K)$.  
We let $\Gamma_0$ be the subgroup of $\bG_m(K)$ spanned by all the coordinates of $\alpha$ and also all coordinates of $y$, and then we let $\Gamma=\Gamma_0^N\subset \bG_m^N(K)$ be its $N$-th cartesian power. Clearly, $\cO_\Phi(\alpha)\subset \Gamma$. Applying Corollary~\ref{precise intersection curves}, the intersection $V(K)\cap\Gamma$ is either a finite union of cosets of subgroups of $\Gamma$, or a finite union of sets of the form $\left\{b_i\cdot a_i^{p^{nk}}\right\}_{n\in\N_0}$ (for some $a,b\in\bG_m^N(K)$ and some $k\in\N_0$) contained in $\Gamma$. At the expense of replacing $\Gamma$ by a larger finitely generated group, we may even assume that the base points $a_i$ and $b_i$ above are also contained in $\Gamma$. Then the conclusion follows from Theorem~\ref{combinatorial theorem}; note also that Proposition~\ref{prop:the more precise main result} yields that if $S$ contains an infinite arithmetic progression, then $S$ itself is a finite union of arithmetic progressions. 
\end{proof}

\begin{remark}
\label{remark next}
It is very difficult to extend the results from the current paper to a proof of Conjecture~\ref{char p DML} beyond curves $V$ contained in $\bG_m^N$. On one hand, if $V\subset \bG_m^N$ is a higher dimensional subvariety, then one deals with products of several $F$-orbits (see Definition~\ref{definition F-sets})   and then instead of equation \eqref{the condition 7}, we would have equations of the form:
\begin{equation}
\label{the condition 13}
w_{n}=b + \sum_{i=1}^\ell a_ip^{k_im_i},
\end{equation}
where $\{w_n\}_{n\in\N_0}$ is a linear recurrence sequence, $b$ and each $a_i$ are rational numbers, while the $k_i$ are nonnegative integers. 
Finding all $n\in\N_0$ such that the linear recurrence sequence $w_{n}$ satisfies equation \eqref{the condition 13} for some $m_i\in\N_0$ is a much more difficult question. For example, a very special case of \eqref{the condition 13} would be finding all $n\in\N_0$ for which there exist some $m_i\in\N_0$ such that
$$n^d=1 + \sum_{i=1}^\ell 2^{m_i},$$
which is, yet, unsolved when $\ell>3$ and $d\ge 2$ (see \cite{cifre}).  On the other hand, if we try to solve Conjecture~\ref{char p DML} in the case of curves $V$ contained in some abelian variety $X$ defined over a finite field $\Fq$, going through a similar argument as in the proof of Theorem~\ref{main result} we would need to solve equally difficult polynomial-exponential equations of the form 
\begin{equation}
\label{the condition 14}
Q(n)=v_m,
\end{equation}
where $Q(x)\in\Q[x]$ and $\{v_m\}$ is a linear recurrence sequence whose characteristic roots are the eigenvalues corresponding the the Frobenius $F$ acting on $X$. Since these eigenvalues, i.e., the roots of the minimal equation with integer coefficients satisfied by $F$ have all absolute value equal to $q^{\frac{1}{2}}$ (according to the Weil conjectures for abelian varieties over finite fields), the equation \eqref{the condition 14} is particularly difficult. 

Finally,  we note that it is unknown whether Moosa-Scanlon result from \cite{Moosa-Scanlon} (see also \cite{groups}) extends to non-isotrivial semiabelian varieties $X$; hence, in the non-isotrivial case of semiabelian varieties, Conjecture~\ref{char p DML} is even more difficult since we do not have  a complete description of the intersection of a subvariety of $X$ with a finitely generated subgroup of $X$ as in Theorem~\ref{Moosa-Scanlon theorem}. 
\end{remark}



\begin{thebibliography}{GTZ9999}
\newcommand{\au}[1]{{#1},}
\newcommand{\ti}[1]{\textit{#1},}
\newcommand{\jo}[1]{{#1}}
\newcommand{\vo}[1]{\textbf{#1}}
\newcommand{\yr}[1]{(#1),}
\newcommand{\page}[1]{#1.}
\newcommand{\ppx}[1]{#1,}
\newcommand{\pps}[1]{#1.}
\newcommand{\bk}[1]{{#1},}
\newcommand{\inbk}[1]{in: {#1}}
\newcommand{\xxx}[1]{{arXiv:#1.}}


\bibitem[Bel06]{Bel06}
\au{J.~P.~Bell}
\ti{A generalised {S}kolem-{M}ahler-{L}ech theorem for affine varieties}
\jo{J. Lond. Math. Soc. (2)}
\vo{73}
\yr{2006}
\pps{367--379}



\bibitem[BGT10]{Jason}
\au{J.~P.~Bell, D.~ Ghioca, and T.~J.~ Tucker}
\ti{The dynamical Mordell--Lang problem for \'etale maps}
\jo{Amer. J. Math.}
\vo{132}
\yr{2010}
\pps{1655--1675}

\bibitem[BGT15]{noetherian}
J.~P.~Bell, D.~Ghioca, and T.~J.~Tucker, \emph{The dynamical Mordell-Lang problem for Noetherian spaces}, Funct. Approx. Comment. Math. \textbf{53} (2015), no.~2, 313–-328.

\bibitem[BGT16]{book}
\au{J.~P.~Bell, D.~Ghioca, and T.~J.~Tucker}
\ti{The Dynamical Mordell-Lang Conjecture}
Mathematical Surveys and Monographs \textbf{210}, American Mathematical Society, Providence, RI, 2016, xiv+280 pp.

\bibitem[CZ13]{cifre}
P.~Corvaja and U.~Zannier, \emph{Finiteness of odd perfect powers with four nonzero binary digits}, Ann. Inst. Fourier (Grenoble) \textbf{63} (2013), no.~2, 715–-731.


\bibitem[Den94]{Den94}
\au{L.~Denis} 
\ti{G\'eom\'etrie et suites r\'ecurrentes}
\jo{Bull. Soc. Math. France}
\vo{122}
\yr{1994}
\pps{13--27}

\bibitem[DM12]{Derksen-Masser}
H.~Derksen and D.~Masser, \emph{Linear equations over multiplicative groups, recurrences, and mixing I}, Proc. Lond. Math. Soc. (3) \textbf{104} (2012), no.~5, 1045-–1083. 


\bibitem[Fal94]{Fal94}
\au{G.~Faltings} 
\ti{The general case of {S.}~{L}ang's conjecture}
Barsotti Symposium in Algebraic Geometry (Albano Terme, 1991),
Perspective in Math., vol.~15, Academic Press, San Diego, 1994, pp.~175--182 

\bibitem[Ghi08]{groups}
D.~Ghioca, \emph{The isotrivial case in the Mordell-Lang theorem}, Trans. Amer. Math. Soc. \textbf{360} (2008), no.~7, 3839–-3856.


\bibitem[GT09]{GT-JNT}
\au{D.~Ghioca and T.~J.~Tucker} 
\ti{Periodic points, linearizing maps, and the dynamical Mordell-Lang problem}
\jo{J. Number Theory}
\vo{129}
\yr{2009}
\pps{1392--1403}

\bibitem[Hru96]{Hrushovski}
E.~Hrushovski, \emph{The Mordell-Lang conjecture for function fields}, J. Amer. Math. Soc. \textbf{9} (1996), no.~3, 667–-690.











\bibitem[Lec53]{Lech}
\au{C.~Lech}
\ti{A note on recurring series}
\jo{Ark. Mat.}
\vo{2}
\yr{1953}
\pps{417--421}

\bibitem[Mah56]{Mahler}
K.~Mahler, \emph{On the Taylor coefficients of rational functions}, 
Proc. Cambridge Philos. Soc. \textbf{52} (1956), 39–-48.

\bibitem[MS04]{Moosa-Scanlon}
R.~Moosa and T.~Scanlon, \emph{$F$-structures and integral points on semiabelian varieties over finite fields}, Amer. Journal of Math. \textbf{126} (2004),  473--522.


\bibitem[Pet15]{Clay}
C.~Petsche, \emph{On the distribution of orbits in affine varieties}, 
Ergodic Theory Dynam. Systems \textbf{35} (2015), no.~7, 2231–-2241.

\bibitem[Sch03]{Sch03}
\au{W.~Schmidt}
\ti{Linear recurrence sequences}
Diophantine Approximation (Cetraro, Italy, 2000),
Lecture Notes in Math. 1819, Springer-Verlag 
Berlin Heidelberg, 2003, pp.~171--247.

\bibitem[Sie29]{Siegel}
\au{C.~L.~Siegel}
\ti{\"Uber einige Anwendungen Diophantischer Approximationen}
\jo{Abh. Preuss. Akad. Wiss. Phys. Math. Kl.}
\yr{1929}
\pps{41--69}
(Reprinted as pp. 209--266 of
his Gesammelte Abhandlungen I, Springer, Berlin, 1966.)

\bibitem[Sko34]{Skolem}
\au{T.~Skolem}
\ti{Ein Verfahren zur Behandlung gewisser exponentialer Gleichungen und diophantischer
Gleichungen}
Comptes Rendus Congr. Math. Scand. (Stockholm, 1934) 163--188.


\bibitem[Voj96]{Voj96}
\au{P.~Vojta}
\ti{Integral points on subvarieties of semiabelian varieties, {I}}
\jo{Invent. Math.}
\vo{126}
\yr{1996}
\pps{133--181}

\end{thebibliography}
\end{document}